\newtheorem{theo}{Theorem}[section]
\newtheorem{prop}[theo]{Proposition}
\newtheorem{lemma}[theo]{Lemma}
\newtheorem{defn}[theo]{Definition}
\newtheorem{rem}[theo]{Remark}
\newtheorem{ex}[theo]{Example}
\tikzset{->-/.style={decoration={
  markings,
  mark=at position .5 with {\arrow{>}}},postaction={decorate}}}
\tikzset{-<-/.style={decoration={
  markings,
  mark=at position .5 with {\arrow{<}}},postaction={decorate}}}
\begin{document}

\title[$\mathfrak{gl}(1 \vert 1)$-invariant, Reidemeister torsion and lens spaces]{$\mathfrak{gl}(1 \vert 1)$-Alexander polynomial for $3$-manifolds, Reidemeister torsion and lens spaces}
\author{Yuanyuan Bao}

\address{
Division of Mathematics \&
Research Center for Pure and Applied Mathematics,
Graduate School of Information Sciences,
Tohoku University, 6-3-09 Aramaki-Aza-Aoba, Aoba-ku, Sendai 980-8579, Japan
}
\email{yybao@tohoku.ac.jp}

\keywords{$\mathfrak{gl}(1\vert 1)$, Alexander polynomial, Reidemeister torsion, lens spaces.}
\subjclass[2020]{Primary 57K10, 57K16, 57K31}

\maketitle

\begin{abstract}
In this note, we reformulate the invariant $\Delta (M, \omega)$ that we defined before, and show its relation with Reidemeister torsion. We calculate $\Delta (M, \omega)$ when the $3$-manifolds are lens spaces, and discuss the classification of the lens spaces using $\Delta (M, \omega)$.
\end{abstract}

\section{Introduction}
A $3$-manifold in this paper indicates a connected  closed oriented smooth $3$-manifold. Kirby calculus \cite{MR467753} provides a way to study $3$-manifolds using framed links in the $3$-sphere $S^3$. 
Based on this theory, a combination of invariants for framed links defines a topological invariant for $3$-manifolds, if it is invariant under Kirby moves. 

Using Witten's ideas \cite{MR990772} from physics, Reshetikhin and Turaev \cite{MR1091619} provided the first rigorous construction of $3$-manifold invariants using linear sums of quantum invariants of framed links. Turaev \cite{MR3617439} further showed that from a modular category one can derive a Topological Quantum Field Theory, which includes $3$-manifold invariants. More recently, Turaev's theory has been further extended to many other types of categories, such as $G$-modular categories \cite{MR2674592}, relative $G$-premodular categories and so on. 

Costantino, Geer and Patureau \cite{MR3286896} proposed the concept relative $G$-premodular category and proved that the quantum invariant of framed links constructed from a relative $G$-premodular category can be used to define a $3$-manifold invariant. In particular, they constructed the $3$-manifold invariants $N_r$ based on a category $\mathscr{C}$, which is the tensor category of nilpotent representations of quantum $\mathfrak{sl}(2)$ at a root of unity of order $2r$ where $r\geq 2$ is not divisible by $4$.

Blanchet, Costantino, Geer and Patureau \cite{MR3539369} then constructed a new family of TQFTs based on the category $\mathscr{C}$. In particular when $r=2$, they showed that the invariant $Z_2$, which is a renormalization of $N_2$, is a canonical normalization of Reidemeister torsion. Thus their theory provides a TQFT for Reidemeister torsion. De Renzi \cite{MR4403930} further developed a general theory for the construction of Extended Topological Quantum Field Theories (ETQFTs) associated with the CGP invariants.


In \cite{MR4576428}, we followed the method in \cite{MR3286896} and constructed a CGP type invariant $\Delta (M, \omega)$ for a $3$-manifold $M$ and a non-trivial cohomology class $\omega$. The quantum invariant we used is Viro's $\mathfrak{gl}(1\vert 1)$-Alexander polynomial defined in \cite{MR2255851}. Briefly speaking, for a field $B$ of characteristic $0$ and a subgroup $G$ of the multiplicative group of $B$, there is a category $\mathcal{M}_B$ of finite dimensional modules over a $q$-less subalgebra $U^1$ of  quantum $\mathfrak{gl}(1 \vert 1)$. Viro's $\mathfrak{gl}(1\vert 1)$-Alexander polynomial is defined by using $\mathcal{M}_B$. Recently, Geer and Yong \cite{geer} defined an unrolled quantization $U_q^E (\mathfrak{gl}(1\vert 1))$ and discussed how to construct TQFTs and in particular $3$-manifold invariants from the category of weight modules of $U_q^E (\mathfrak{gl}(1\vert 1))$. It is not yet clear to us the relation between $\Delta (M, \omega)$ and the invariants defined in \cite{geer}.

In this note, we provide a reformulation of $\Delta(M, \omega)$. Using similar discussion as in Section 6.7 of \cite{MR3539369}, we show a  relation between $\Delta(M, \omega)$ and the Reidemeister torsion of $M$. In the last part, we calculate $\Delta(M, \omega)$  for lens spaces and discuss the classification of lens spaces. 

Although defined from different quantum groups, the invariants $N_2$ and $\Delta(M, \omega)$ share many similarities. We expect the existence of a TQFT for $\Delta(M, \omega)$ and it would be interesting to compare it with that of $N_2$. 

\medskip

\noindent{\bf Acknowledgements}
The author would like to thank Jun Murakami and Noboru Ito for helpful comments. The author was partially supported by JSPS KAKENHI Grant Number JP20K14304. 


\section{Viro's $\mathfrak{gl}(1\vert 1)$-Alexander polynomial for a framed link}

In this section, we recall some basic facts and properties about Viro's $\mathfrak{gl}(1\vert 1)$-Alexander polynomial that will be used in Sections 3 and 4.

Viro \cite{MR2255851} defined a functor from the category of colored framed oriented trivalent graphs to the category of finite dimensional modules over a subalgebra $U^1$ of the $q$-deformed universal enveloping superalgebra $U_{q}(\mathfrak{gl}(1 \vert 1))$. Using this functor, in \cite[$\S$6]{MR2255851}, he defined the $\mathfrak{gl}(1\vert 1)$-Alexander polynomial for a trivalent graph. In this paper, most of the time, we only consider this polynomial for the case of links.

The definition requires some data called $1$-palette. A {\it $1$-palette} (see \cite[$\S$2, 2.8]{MR2255851}) is a quadruple $$(B, G, W, G\times W \to G),$$ where $B$ is a commutative ring with unit, $G$ is a subgroup of the multiplicative group of $B$, $W$ is a subgroup of the additive group of $B$ which contains the unit $1$ of $B$, and $G\times W \to G: (t, N)\mapsto t^{N}$ is a bilinear map satisfying $t^1=t$ for each $t\in G$. 

In this paper, we assume that $B$ is a field of characteristic $0$. Let $G$ be a subgroup of the multiplicative group of $B$, which is an abelian group, and let $W=\mathbb{Z}$ and $G\times \mathbb{Z} \to G: (t, N)\mapsto t^{N}$. It is easy to see that $(B, G, \mathbb{Z}, G\times \mathbb{Z} \to G)$ becomes a $1$-palette, and we simply use $(B, G)$ to denote it because the meanings of $W$ and the bilinear map are clear. 

Let $L=L_1\cup L_2 \cup \cdots \cup L_r$ be an $r$-component oriented framed link in $S^3$. Given a $1$-palette $(B, G)$, a {\it color} of $L$ is a map 
\begin{eqnarray*}
c: \{L_i\}_{i=1}^r &\to& G\times \mathbb{Z}\\
L_i &\mapsto& (t_i, N_i).
\end{eqnarray*}
The first coordinate $t_i$ is called the {\it multiplicity} and the second one $N_i$ is called the {\it weight}. For the pair $(L, c)$, Viro defined the $\mathfrak{gl}(1\vert 1)$-Alexander polynomial $\Delta_c(L) \in B$. For the precise definition, please see \cite{MR2255851}, and note that its notation in \cite{MR2255851} is $\underline{\Delta}^1(L)$. 


\subsection{Properties}
Here we review several properties of $\Delta_c(L)$ to be used in Sections 3 and 4, most of the proofs of which can be found in \cite{MR2255851}. 

\begin{lemma}
\label{inverse}
Let $L=L_1\cup L_2\cup \cdots \cup L_r$ be an oriented framed link with color $c(L_i)=(t_i, 0)$ for $i=1, 2, \cdots, r$. Let $L'$ be the link obtained from $L$ by revising the orientation of a certain component $L_i$, and let $c'$ be the color of $L'$ which is the same as $c$ except that it defines color $(t_i^{-1}, 0)$ on $L_i$. Then we have
$$\Delta_c(L)=-\Delta_{c'}(L').$$
\end{lemma}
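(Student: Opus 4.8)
The plan is to exploit the functoriality of Viro's functor $F$, under which reversing the orientation of a single strand amounts to recoloring that strand by its dual module and rotating the adjacent cup and cap morphisms through the pivotal structure. Fixing a diagram for $L$, I regard $\Delta_c(L)=F(L)$ as the scalar that $F$ assigns to $L$ viewed as a morphism from the empty object to itself. Reversing the orientation of $L_i$ yields a diagram for $L'$, and by naturality of $F$ its value coincides with that of $F(L)$ after $L_i$ is recolored by the dual $V_{t_i,0}^{*}$ and each evaluation and coevaluation along $L_i$ is replaced by its pivotal rotation.

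The second step is to identify this dual module explicitly. Writing $V_{t,0}$ in Viro's basis, with one even and one odd vector and the grouplike generator recording the multiplicity acting through a power of $t$, and applying the antipode $S$ of $U^1$, I would verify the $U^1$-module isomorphism $V_{t,0}^{*}\cong V_{t^{-1},0}$. The multiplicity inverts because $S$ sends the grouplike generator to its inverse, while the weight stays $0$ since dualization negates the weight and $-0=0$. This is exactly the color $c'(L_i)=(t_i^{-1},0)$ prescribed in the statement, so $\Delta_{c'}(L')$ is computed from precisely the module produced by the recoloring.

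The decisive step is to pin down the scalar relating the pivotally rotated duality morphisms along $L_i$ to the standard ones used to define $\Delta_{c'}(L')$. Because the underlying category is $\mathbb{Z}/2$-graded, rotating a cap to a cup through the odd basis vector of $V_{t,0}$ produces a Koszul sign; since exactly one component is reoriented, this odd vector is threaded exactly once, so the cumulative factor is $-1$ rather than $+1$. The $R$-matrix contributions at the crossings along $L_i$ are already absorbed by the passage to the dual, so only these parity-sensitive turnbacks contribute, and one obtains $\Delta_c(L)=-\Delta_{c'}(L')$ independently of the number of crossings.

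I expect the sign determination in the last step to be the main obstacle. One must check, against Viro's precise conventions for the graded evaluation, coevaluation and antipode, that the super-flip contributes a single factor $-1$ and that no compensating sign is hidden in the ribbon twist or in the dualized $R$-matrices. The cleanest route is to reduce to a single turnback, namely an unknotted maximum and minimum colored by $V_{t,0}$, verify the sign there by a direct evaluation, and then invoke functoriality to propagate it to the general link.
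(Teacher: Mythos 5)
Your strategy---reverse the orientation of $L_i$ by recoloring with the dual module, check $V_{t_i,0}^{*}\cong V_{t_i^{-1},0}$ via the antipode, and chase the sign through the pivotal structure---is genuinely different from the paper's argument, which simply cites Viro's observation (end of 7.7.F.bis of his paper) that simultaneously reversing the orientation and inverting the multiplicity of a component leaves all Boltzmann weights of the state sum unchanged, and reads the residual sign off from that formulation. However, your setup has a gap at its foundation: $\Delta_c(L)$ is \emph{not} ``the scalar that $F$ assigns to $L$ viewed as a morphism from the empty object to itself.'' The modules $V_{(t,N)}$ have vanishing quantum dimension, so Viro's functor sends every closed link diagram to $0$; his $\underline{\Delta}^1$ is defined by cutting one edge open, obtaining a $(1,1)$-tangle whose image is determined by a scalar on a graded piece, and extracting that scalar. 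As written, your identity reads $0=-0$ and carries no content, and your proposed base case---an unknotted maximum and minimum, i.e.\ a closed circle---also evaluates to $0$ on both sides, so it cannot detect the sign you are trying to pin down. Any repaired argument must be run at the level of the cut-open tangle, treating separately the cases where the cut point lies on the reoriented component $L_i$ or on some other component.

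The sign count itself is the second problem. A reoriented closed component has equally many maxima and minima, hence an even number of turnbacks whose duality morphisms get rotated; a ``Koszul sign per rotated turnback'' heuristic would then naively contribute an even power of $-1$, and your assertion that ``the odd vector is threaded exactly once'' is not supported by anything in the argument. The single surviving $-1$ has to be located more carefully: it comes from the interaction of the parity structure of the isomorphism $V_{(t,0)}^{*}\cong V_{(t^{-1},0)}$ with the normalization at the cut edge (consistently with the paper's remark that the Boltzmann weights themselves are all preserved, so the sign cannot be distributed over the crossings and extrema). Until you (i) reformulate the functoriality argument for the cut-open $(1,1)$-tangle and (ii) exhibit explicitly the one place where a parity flip survives, the proof is incomplete; with those two repairs the categorical route should go through and would give a more conceptual proof than the paper's citation of Viro's state-sum symmetry.
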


The lemma was not clearly stated in \cite{MR2255851}, but its proof is based on the fact that simultaneously reversing the orientation and reversing the multiplicity of a link component does not change the Boltzmann weights, as stated in the end of \cite[7.7.F.bis]{MR2255851}. We remind that the Boltzmann weights in the third and forth columns of \cite[Table 3]{MR2255851} around a crossing contains some typos.

As stated in Section 7.5 of \cite{MR2255851}, introducing nonzero weights and framings does not enrich the Alexander polynomial of a link. The influence of changing weights was given in 7.5A. The following lemma is a special case of it.

\begin{lemma}
\label{weight}
Suppose $c$ is a color of an oriented framed link $L=L_1\cup L_2\cup \cdots \cup L_r$ so that $c(L_i)=(t_i, 1)$ for $i=1, 2, \cdots, r$. Let $c_0$ be the color of $L$ so that $c_0(L_i)=(t_i, 0)$. Then we have 
$$\Delta_c(L)=\prod_{i=1}^{r}t_i^{-2(\sum_{j=1}^r lk_{ij})}\Delta_{c_0}(L),$$
where $lk_{ij}$ denotes the linking number between $L_i$ and $L_j$ for $i\neq j$, and $lk_{ii}$ is the framing of $L_i$.
\end{lemma}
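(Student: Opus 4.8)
The plan is to compute both sides from the state-sum definition underlying Viro's functor on a fixed diagram $D$ of $L$, and to isolate the dependence of the local Boltzmann weights on the weights $N_i$. The guiding observation I would establish is that, in Viro's tables, the weight of a colored strand enters each local Boltzmann weight only through a scalar monomial in the multiplicities: at a crossing whose over- and under-strands lie on components $L_a$ and $L_b$ (carrying colors $(t_a,N_a)$ and $(t_b,N_b)$), changing from weight $0$ to the given weights multiplies the local weight by a factor of the form $t_a^{N_b}t_b^{N_a}$ raised to the sign of the crossing, while the genuinely nilpotent (``unipotent'') part responsible for the polynomial behaviour is independent of the $N_i$. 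This is precisely the content one reads off from the crossing entries of \cite[Table 3]{MR2255851}, bearing in mind the typos flagged after Lemma \ref{inverse}.

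Since these monomials are global scalars, they commute past the state sum, so that
$$\Delta_c(L)=\left(\prod_{\text{crossings of }D}\left(t_a^{N_b}t_b^{N_a}\right)^{\pm 1}\right)\Delta_{c_0}(L),$$
where for each crossing $a$ and $b$ denote the components of the over- and under-strand and the exponent sign is the sign of the crossing. It then remains to evaluate this product by grouping crossings according to the unordered pair of components meeting there. Setting all $N_i=1$, a crossing between distinct components $L_i$ and $L_j$ contributes $t_i t_j$ weighted by its sign, so summing over all such crossings yields the signed crossing number $2\,lk_{ij}$ in the exponent of each of $t_i$ and $t_j$; a self-crossing of $L_i$ contributes $t_i^2$ with its sign, which sums to $2\,lk_{ii}$ through the writhe, equal to the framing. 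Collecting the contributions, the exponent of $t_i$ becomes $2\sum_{j=1}^r lk_{ij}$, and the sign convention from the tables supplies the overall minus, giving the claimed formula.

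The main obstacle is the bookkeeping at the critical points together with the verification of signs. One must check that the weight-dependent scalars attached to the maxima and minima cancel in pairs along each closed component (equivalently, are absorbed into the framing normalization), so that only the crossing factors survive in the ratio $\Delta_c(L)/\Delta_{c_0}(L)$; this is exactly where the corrections to \cite[Table 3]{MR2255851} noted after Lemma \ref{inverse} become essential. One should also confirm that the resulting monomial is independent of the chosen diagram $D$, which is automatic because both $\Delta_c$ and $\Delta_{c_0}$ are themselves invariants, so their quotient cannot depend on $D$. The feature most easily mishandled is the factor of $2$: it reflects the fact that each linking pair $\{i,j\}$ is counted once in the exponent of $t_i$ and once in that of $t_j$, and that the writhe of $L_i$ enters with multiplicity two because both strands at a self-crossing carry the color $(t_i,1)$.
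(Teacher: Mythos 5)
The paper does not actually prove this lemma: it invokes Viro's general weight-change formula (7.5.A of \cite{MR2255851}) and observes that the statement is the special case where all weights equal $1$. Your proposal instead re-derives that formula directly from the state sum, which is a genuinely different (and more self-contained) route. Your exponent bookkeeping is consistent with the target: a per-crossing factor $(t_a^{N_b}t_b^{N_a})^{-\epsilon}$ with $N_i=1$ contributes $(t_it_j)^{-\epsilon}$ at each of the $2\,lk_{ij}$ signed crossings between distinct components and $t_i^{-2\epsilon}$ at each self-crossing of $L_i$, whose signs sum to the writhe, i.e.\ the framing $lk_{ii}$ in the blackboard convention; collecting terms gives exactly $t_i^{-2\sum_j lk_{ij}}$. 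What the citation buys is that none of this needs to be checked; what your route buys is a proof that does not rely on the unstated general formula, at the cost of having to verify Viro's tables by hand.

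That said, as written your argument defers its only two load-bearing claims to ``one reads off from Table 3'' and ``one must check'': namely (i) that the dependence of each crossing's Boltzmann weights on the weights $N_a,N_b$ is a \emph{single} scalar monomial, common to all matrix entries of the $R$-matrix, with the exponent $-\epsilon$ (your ``overall minus'' is currently supplied by fiat, and it is exactly the sign that produces the $-2$ in the lemma rather than $+2$); and (ii) that the weight-dependent factors at maxima and minima cancel around each closed component. If either factorization failed entry-by-entry --- for instance if only the diagonal entries carried the monomial --- the scalar could not be pulled out of the state sum and the argument would collapse. These two points are precisely the content of Viro's 7.5.A, so to turn your sketch into a proof you must actually perform the Table 3 computation (taking into account the typos the paper flags after Lemma \ref{inverse}) rather than assert its outcome. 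Everything downstream of that verification is correct.
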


The behavior under connected sum can be obtained easily from the definition.

\begin{lemma}
\label{sum}
Suppose $(L, c)$ and $(L', c')$ are two colored framed links. Assume that there are two components $L_i\subset L$ and $L'_j\subset L'$ so that $c(L_i)=c'(L'_j)=(t, N)$.
Let $L\sharp L'$ be the connected sum of $L$ and $L'$ along $L_i$ and $L'_j$, and $c\sharp c'$ be the color of $L\sharp L'$ inherited from $c$ and $c'$. Then we have
$$\Delta_{c\sharp c'}(L\sharp L')=(t^2-t^{-2})\Delta_c(L)\Delta_{c'}(L').$$
\end{lemma}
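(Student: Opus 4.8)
The plan is to compute $\Delta_c$ by cutting open the component along which the connected sum is formed, so that the identity reduces to the multiplicativity of Viro's functor under stacking of $(1,1)$-tangles. Write $F$ for Viro's functor and $V=V_{(t,N)}$ for the $U^1$-module in $\mathcal{M}_B$ attached to the color $(t,N)$. For generic multiplicity $t$ this is the $2$-dimensional typical (simple) module, and its endomorphism algebra satisfies $\mathrm{End}_{\mathcal{M}_B}(V)=B\cdot\mathrm{id}_V$. Cutting $L$ at one point of $L_i$ turns $L$ into a $(1,1)$-tangle $T_L$ whose open strand is colored $(t,N)$, and applying $F$ produces an endomorphism of $V$, hence a scalar: $F(T_L)=\langle T_L\rangle\,\mathrm{id}_V$ with $\langle T_L\rangle\in B$. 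Likewise, cutting $L'$ along $L'_j$ yields $T_{L'}$ with $F(T_{L'})=\langle T_{L'}\rangle\,\mathrm{id}_V$.

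The next step is to read off how the connected sum behaves under this cutting. Because $c(L_i)=c'(L'_j)=(t,N)$, the merged component of $L\sharp L'$ carries the single well-defined color $(t,N)$, and cutting $L\sharp L'$ at the connecting band exhibits its cut-open tangle as the composite $T_L\circ T_{L'}\colon V\to V$. Functoriality then gives $F(T_L\circ T_{L'})=F(T_L)F(T_{L'})=\langle T_L\rangle\langle T_{L'}\rangle\,\mathrm{id}_V$, so that $\langle T_{L\sharp L'}\rangle=\langle T_L\rangle\langle T_{L'}\rangle$. This is exactly the content meant by ``easily from the definition'': in Viro's state-sum picture it is just the statement that composing two diagonal scalar endomorphisms multiplies their eigenvalues.

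Finally I would relate the cut-open scalar back to the invariant. Closing up a $(1,1)$-tangle $T$ multiplies its scalar by a fixed constant (independent of the interior of $T$, since $F(T)$ is a scalar and closure is linear), and specializing to $T=\mathrm{id}_V$, whose closure is $U_{(t,N)}$ and whose scalar is $1$, identifies that constant as the colored unknot value; hence $\Delta_c(L)=\Delta_c(U_{(t,N)})\,\langle T_L\rangle$ for every $L$. The one genuine computation is the colored unknot itself: from Viro's cap and cup morphisms for $V$ one finds
$$\Delta_c(U_{(t,N)})=\frac{1}{t^{2}-t^{-2}},$$
which is independent of the weight $N$, in accordance with Lemma \ref{weight} (a $0$-framed unknot has $\sum_j lk_{ij}=0$). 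Assembling the three facts gives
$$\Delta_{c\sharp c'}(L\sharp L')=\Delta_c(U_{(t,N)})\,\langle T_L\rangle\langle T_{L'}\rangle=\frac{\Delta_c(L)\,\Delta_{c'}(L')}{\Delta_c(U_{(t,N)})}=(t^{2}-t^{-2})\,\Delta_c(L)\Delta_{c'}(L'),$$
as claimed. The main obstacle is this middle computation: extracting the precise normalization $\Delta_c(U_{(t,N)})=(t^2-t^{-2})^{-1}$ from Viro's conventions and checking that his definition genuinely factors a link invariant as the unknot value times the cut-open scalar (equivalently, that closing the $V$-colored strand contributes exactly one factor of the modified dimension of $V$). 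Everything else is formal functoriality.
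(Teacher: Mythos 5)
Your argument is correct and is precisely what the paper means by ``obtained easily from the definition'': the paper supplies no proof of this lemma, and since Viro defines $\underline{\Delta}^1$ via the scalar of the cut-open $(1,1)$-tangle normalized by the color-dependent factor $d(t)=1/(t^{2}-t^{-2})$, the identity $\Delta_c(L)=\Delta_c(U_{(t,N)})\,\langle T_L\rangle$ that you single out as the one point needing verification holds essentially by definition, and your composition-of-scalar-endomorphisms step does the rest. The only caution is your justification ``closure is linear'': the genuine categorical closure of a $V$-colored $(1,1)$-tangle is $\langle T\rangle\cdot\mathrm{qdim}(V)=0$ here, so the factorization must be taken from Viro's normalization of the cut-open scalar (as you in fact do), not from a trace argument.
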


\medskip

Now we consider an example which we will use in Section 4.

\begin{ex}
\rm
\label{lens}
Let $L=L_1\cup L_2\cup \cdots \cup L_n$ be the link defined in Fig. \ref{fig1}, where the $i$-th component is colored by $(t_i, 0)$. The $a_i$ inside a box indicates that the framing of the component is $a_i$. This link is a conneced sum of $n-1$ Hopf links. Then by Lemma~\ref{sum}, we have 
$$\Delta_c(L)=\prod_{i=2}^{n-1}(t_i^2-t_i^{-2})\Delta_c(L_{1}\cup L_2)\Delta_c(L_{2}\cup L_3)\cdots \Delta_c(L_{n-1}\cup L_n).$$
Note that since the weights are zero, the framings do not affact the Alexander polynomial, and as calculated in \cite[Example 2.2]{MR4576428} we have $\Delta_c(L_{i-1}\cup L_i)=1$ for $i=2,\cdots, n$. Therefore 
$$\Delta_c(L)=\prod_{i=2}^{n-1}(t_i^2-t_i^{-2}).$$
\end{ex}

\begin{figure}
\begin{center}
\includegraphics[width=80mm]{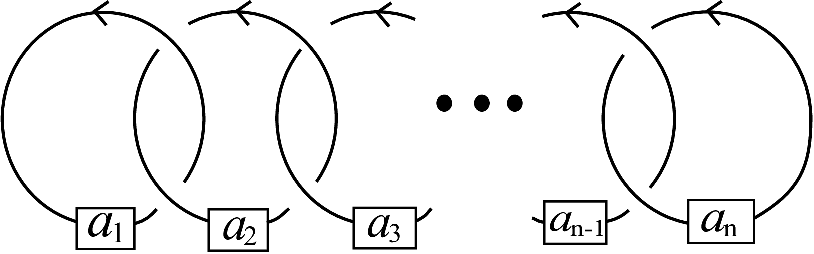}
\caption{A connected sum of Hopf links}
\label{fig1}
\end{center}
\end{figure}

The next lemma is the relation of $\Delta_c(L)$ and the Conway function of $L$.

\begin{lemma}[7.7G in \cite{MR2255851}]
\label{conway}
For a positive integer $r$, let $G_r$ be the free abelian group generated by $t_1, t_2, \cdots, t_r$ and let $B_r$ be the field of rational functions of $t_1, t_2, \cdots, t_r$. For a framed link $L=L_1\cup L_2\cup \cdots \cup L_r$, consider the color $c$ of $L$ defined by $c(L_i)=(t_i, 0)$. Then we have
$$\Delta_c(L)=\nabla(L)(t_1^{2}, t_2^{2}, \cdots, t_r^{2}),$$
where $\nabla(L)$ is the Conway function of $L$.
\end{lemma}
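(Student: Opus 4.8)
The plan is to prove the identity by showing that, after the substitution $t_i \mapsto t_i^2$, Viro's invariant $\Delta_c(L)$ satisfies the same defining properties as the Conway function $\nabla(L)$, which are known to characterize $\nabla$ uniquely. Concretely, I would verify three pieces of data: the value on the unknot, a skein relation, and the behaviour under split and connected sum.

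First I would extract a skein relation for $\Delta_c$ directly from the functorial definition. The morphism that Viro's functor assigns to a positive crossing is governed by the $R$-matrix of $U_q(\mathfrak{gl}(1\vert 1))$ on the relevant tensor product of modules in $\mathcal{M}_B$; this operator has only two distinct eigenvalues, so it satisfies a quadratic relation $c_+ - c_- = (\text{scalar})\cdot\mathrm{id}$. Reading this eigenvalue relation through the functor yields, at a crossing of a single component of multiplicity $t_i$, a local identity of the form $\Delta_c(L_+) - \Delta_c(L_-) = (t_i^2 - t_i^{-2})\,\Delta_c(L_0)$. The coefficient $t_i^2 - t_i^{-2}$ is exactly the one already visible in Lemma~\ref{sum}, and under $t_i \mapsto t_i^2$ it matches the skein coefficient $t_i - t_i^{-1}$ of the Conway function. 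For the base data I would compute $\Delta_c$ on the unknot, where the renormalized (modified-trace) construction gives $(t^2 - t^{-2})^{-1} = \nabla(U)(t^2)$, and observe that $\Delta_c$ vanishes on split links because the modified quantum dimension of the modules in $\mathcal{M}_B$ is zero; both facts mirror the corresponding normalizations of $\nabla$.

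The main obstacle is the multivariable uniqueness step. For a single variable, the skein relation together with the value on the unknot determines the invariant on every link by unknotting. In the colored setting this fails directly, because the oriented smoothing $L_0$ at a crossing between two \emph{different} components $L_i$ and $L_j$ merges them into one component and so changes the variable structure; a pure crossing-change induction does not close. To get around this I would combine the same-component skein relation with the structural formulas already available, namely the connected-sum formula (Lemma~\ref{sum}), the split-union vanishing, and the weight and framing formula (Lemma~\ref{weight}), and run an induction on the number of components together with the number of crossings, invoking the Torres-type reductions that express an $r$-component link through links with fewer components. Matching these reductions for $\Delta_c$ against the known ones for $\nabla$ would complete the identification.

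Alternatively, and perhaps more conceptually, I would route the whole argument through Reidemeister torsion: the Conway function computes the Milnor--Turaev torsion of the link exterior with respect to the abelianization sending the meridian of $L_i$ to $t_i$, while the state sum defining $\Delta_c$ can be identified with the same torsion under $t_i \mapsto t_i^2$. This bypasses the skein bookkeeping entirely, at the cost of invoking the torsion interpretation of the $\mathfrak{gl}(1\vert 1)$ invariant, which is precisely the circle of ideas that the present paper develops in the next sections.
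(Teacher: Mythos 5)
The paper does not actually prove this statement: it is quoted verbatim from Viro's paper (Theorem 7.7.G of \cite{MR2255851}), so there is no internal argument to compare yours against. Judged on its own, your proposal correctly assembles the consistent pieces of evidence --- the value $1/(t^2-t^{-2})$ on the unknot, vanishing on split links via the zero quantum dimension, the connected-sum factor $(t^2-t^{-2})$ matching Lemma~\ref{sum}, and a same-component skein relation with coefficient $t_i^2-t_i^{-2}$ coming from the quadratic relation satisfied by the $R$-matrix --- but it does not close. The gap is exactly the one you flag yourself: for the \emph{multivariable} Conway function, the one-variable-style axioms (unknot value, split vanishing, connected sum, skein at self-crossings) do not characterize the invariant, because resolving a crossing between distinct components changes the component structure and the induction does not terminate. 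Your proposed remedy --- ``combine with Torres-type reductions and induct on components and crossings'' --- is a plan, not an argument; the known characterizations of $\nabla$ (e.g.\ J.~Murakami's axioms, which is essentially what Viro invokes) require additional local relations at mixed crossings and a doubling/cabling axiom that you neither state nor verify for $\Delta_c$. Until those extra relations are identified on the $\mathfrak{gl}(1\vert 1)$ side, the uniqueness step is unproven.

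Your alternative route through Reidemeister torsion has a different problem in the context of this paper: here the torsion interpretation of $\Delta(M,\omega)$ is \emph{derived from} Lemma~\ref{conway} together with Turaev's surgery formula, so identifying the $\mathfrak{gl}(1\vert 1)$ state sum with Milnor--Turaev torsion as a means of proving the lemma would be circular unless you supply an independent proof of that identification (which is itself a theorem of comparable depth). Either complete the skein route by importing and verifying a full axiomatic characterization of $\nabla$, or cite Viro's 7.7.G as the paper does.
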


\subsection{Kirby color}
Now we explain how to extend the definition to the case that each link component has a Kirby color, which was defined in \cite[Definition 3.3]{MR4576428}. 
For a $1$-palette $(B, G)$ and $t\in G$, a {\it Kirby color} $\Omega(t, 1)$ is defined to be a formal linear sum $$\Omega(t, 1)=d(t) (t, 1) + d(t^{-1}) (t^{-1}, 1), $$ where $\displaystyle d(t)=\frac{1}{t^2-t^{-2}}$. Suppose a link $L=L_1\cup L_2\cup \cdots \cup L_r$ has a kirby color $kc$ so that $kc(L_i)=\Omega(t_i, 1)$ for $1\leq i \leq r$. We define $\Delta_{kc}(L)$ by replacing each component with a formal linear sum illustrated as follows.
\begin{align*}
\left(\begin{tikzpicture}[baseline=-0.65ex, thick, scale=1]
\draw (0.5,-0.5) [<-]  to (0.5,1.2);
\draw (0.5,-0.8) node {$\Omega(t_i, 1)$};
\end{tikzpicture}\right):=d(t_i)\left(\begin{tikzpicture}[baseline=-0.65ex, thick, scale=1]
\draw (0.5,-0.5) [<-]  to (0.5,1.2);
\draw (0.5,-0.8) node {$(t_i, 1)$};
\end{tikzpicture}\right)+d(t_i^{-1})\left(\begin{tikzpicture}[baseline=-0.65ex, thick, scale=1]
\draw (0.5,-0.5) [->]  to (0.5,1.2);
\draw (0.5,-0.8) node {$(t_i^{-1}, 1)$};
\end{tikzpicture}\right),
\end{align*}
where a colored diagram inside round brackets represents the $\mathfrak{gl}(1\vert 1)$-Alexander polynomial of it. When $L$ has $r$ components, $\Delta_{kc}(L)$ is a linear sum of $2^r$ terms.

\begin{ex} 
\rm
In the following diagram of a Hopf link, each component has a Kirby color. By definition we have
\begin{eqnarray*}
\left (\begin{tikzpicture}[baseline=-0.65ex, thick, scale=1]
\draw (-0.45,1) node {$\scriptstyle m$};
\draw (-0.3,1.1) to (-0.6,1.1);
\draw (-0.3,0.89) to (-0.3,1.11);
\draw (-0.3,0.90) to (-0.6,0.90);
\draw (-0.6,0.89) to (-0.6,1.11);
\draw (-0.3,1) [<-]to (0.3,0.4);
\draw (-0.3,0.4) to (-0.1,0.6);
\draw (0.1,0.8) [->] to (0.3,1);
\draw (-0.3,0.4) to (0.25,-0.09);
\draw (0.45,1) node {$\scriptstyle n$};
\draw (0.6,1.1) to (0.3,1.1);
\draw (0.6,0.89) to (0.6,1.11);
\draw (0.6,0.90) to (0.3,0.90);
\draw (0.3,0.89) to (0.3,1.11);
\draw (0.1,0.2) to (0.3,0.4);
\draw  (-0.6,1) arc (90:315:0.6);
\draw  (0.6,1) arc (90:-140:0.6);
\draw (0.5,-0.5) node {$\scriptstyle \Omega(v, 1)$};
\draw (-0.5,-0.5) node {$\scriptstyle \Omega(u, 1)$};
\end{tikzpicture}\right )
= d(u)d(v)\left[
\left (\begin{tikzpicture}[baseline=-0.01ex, thick, scale=1]
\draw (-0.45,1) node {$\scriptstyle m$};
\draw (-0.3,1.1) to (-0.6,1.1);
\draw (-0.3,0.89) to (-0.3,1.11);
\draw (-0.3,0.90) to (-0.6,0.90);
\draw (-0.6,0.89) to (-0.6,1.11);
\draw (-0.3,1) [<-]to (0.3,0.4);
\draw (-0.3,0.4) to (-0.1,0.6);
\draw (0.1,0.8) [->] to (0.3,1);
\draw (-0.3,0.4) to (0.25,-0.09);
\draw (0.45,1) node {$\scriptstyle n$};
\draw (0.6,1.1) to (0.3,1.1);
\draw (0.6,0.89) to (0.6,1.11);
\draw (0.6,0.90) to (0.3,0.90);
\draw (0.3,0.89) to (0.3,1.11);
\draw (0.1,0.2) to (0.3,0.4);
\draw  (-0.6,1) arc (90:315:0.6);
\draw  (0.6,1) arc (90:-140:0.6);
\draw (0.5,-0.5) node {$\scriptstyle (v, 1)$};
\draw (-0.5,-0.5) node {$\scriptstyle (u, 1)$};
\end{tikzpicture}\right)
-
\left (\begin{tikzpicture}[baseline=-0.01ex, thick, scale=1]
\draw (-0.45,1) node {$\scriptstyle m$};
\draw (-0.3,1.1) to (-0.6,1.1);
\draw (-0.3,0.89) to (-0.3,1.11);
\draw (-0.3,0.90) to (-0.6,0.90);
\draw (-0.6,0.89) to (-0.6,1.11);
\draw (-0.3,1) [->]to (0.3,0.4);
\draw (-0.3,0.4) to (-0.1,0.6);
\draw (0.1,0.8) [->] to (0.3,1);
\draw (-0.3,0.4) to (0.25,-0.09);
\draw (0.45,1) node {$\scriptstyle n$};
\draw (0.6,1.1) to (0.3,1.1);
\draw (0.6,0.89) to (0.6,1.11);
\draw (0.6,0.90) to (0.3,0.90);
\draw (0.3,0.89) to (0.3,1.11);
\draw (0.1,0.2) to (0.3,0.4);
\draw  (-0.6,1) arc (90:315:0.6);
\draw  (0.6,1) arc (90:-140:0.6);
\draw (0.5,-0.5) node {$\scriptstyle (v, 1)$};
\draw (-0.5,-0.5) node {$\scriptstyle (u^{-1}, 1)$};
\end{tikzpicture}\right)\right.\\
\left.-
\left (\begin{tikzpicture}[baseline=-0.01ex, thick, scale=1]
\draw (-0.45,1) node {$\scriptstyle m$};
\draw (-0.3,1.1) to (-0.6,1.1);
\draw (-0.3,0.89) to (-0.3,1.11);
\draw (-0.3,0.90) to (-0.6,0.90);
\draw (-0.6,0.89) to (-0.6,1.11);
\draw (-0.3,1) [<-]to (0.3,0.4);
\draw (-0.3,0.4) to (-0.1,0.6);
\draw (0.1,0.8) [<-] to (0.3,1);
\draw (-0.3,0.4) to (0.25,-0.09);
\draw (0.45,1) node {$\scriptstyle n$};
\draw (0.6,1.1) to (0.3,1.1);
\draw (0.6,0.89) to (0.6,1.11);
\draw (0.6,0.90) to (0.3,0.90);
\draw (0.3,0.89) to (0.3,1.11);
\draw (0.1,0.2) to (0.3,0.4);
\draw  (-0.6,1) arc (90:315:0.6);
\draw  (0.6,1) arc (90:-140:0.6);
\draw (0.5,-0.5) node {$\scriptstyle (v^{-1}, 1)$};
\draw (-0.5,-0.5) node {$\scriptstyle (u, 1)$};
\end{tikzpicture}\right)
+
\left(\begin{tikzpicture}[baseline=-0.01ex, thick, scale=1]
\draw (-0.45,1) node {$\scriptstyle m$};
\draw (-0.3,1.1) to (-0.6,1.1);
\draw (-0.3,0.89) to (-0.3,1.11);
\draw (-0.3,0.90) to (-0.6,0.90);
\draw (-0.6,0.89) to (-0.6,1.11);
\draw (-0.3,1) [->]to (0.3,0.4);
\draw (-0.3,0.4) to (-0.1,0.6);
\draw (0.1,0.8) [<-] to (0.3,1);
\draw (-0.3,0.4) to (0.25,-0.09);
\draw (0.45,1) node {$\scriptstyle n$};
\draw (0.6,1.1) to (0.3,1.1);
\draw (0.6,0.89) to (0.6,1.11);
\draw (0.6,0.90) to (0.3,0.90);
\draw (0.3,0.89) to (0.3,1.11);
\draw (0.1,0.2) to (0.3,0.4);
\draw  (-0.6,1) arc (90:315:0.6);
\draw  (0.6,1) arc (90:-140:0.6);
\draw (0.5,-0.5) node {$\scriptstyle (v^{-1}, 1)$};
\draw (-0.5,-0.5) node {$\scriptstyle (u^{-1}, 1)$};
\end{tikzpicture}\right)\right].
\end{eqnarray*}
\end{ex}

\section{$\mathfrak{gl}(1\vert 1)$-Alexander polynomial for a $3$-manifold.}

Consider a $1$-palette $(B, G)$ where $G$ is a finitely generated abelian group containing $\mathbb{Z}$ but no $\mathbb{Z}/2\mathbb{Z}$ as a subgroup. Namely, $G$ has free summand but does not have any elements of order $2$.

For a 3-manifold $M$ and a non-trivial cohomology class $\omega: H_{1}(M, \mathbb{Z})\to G$, we constructed an invariant $\Delta(M; \omega)$ in \cite{MR4576428}. Note that the original definition also considered a ribbon graph $\Gamma$ in $M$, but here we take $\Gamma$ to be the empty set.

The definition is as follows. Let $L=L_1\cup L_2\cup \cdots \cup L_r$ be a surgery presentation of $M$. We say that $L$ is {\it computable} for $(M, \omega)$ if for any $1\leq i \leq r$, $\omega ([m_i])\neq 1\in G$, where $[m_i]$ is the homology class of an oriented meridian of $L_i$. We showed in \cite{MR4576428} that the value in Definition \ref{invariant} does not change under Kirby moves, and thus becomes a $3$-manifold invariant.

\begin{defn}
\label{invariant}
\rm 
For a $3$-manifold $M$ and a non-trivial cohomology class $\omega: H_{1}(M, \mathbb{Z})\to G$, let $L=L_1\cup L_2\cup \cdots \cup L_r$ be a computable surgery presentation of $(M, \omega)$. Then 
$$\Delta (M, \omega):=\frac{\Delta_{kc} (L)}{2^{r}(-1)^{\sigma_{+}(L)}} \in B$$ is called the {\it $\mathfrak{gl}(1\vert 1)$-Alexander polynomial} for $(M, \omega)$, where $\sigma_{+}(L)$ is the number of positive eigenvalues of the linking matrix of $L$ and $kc$ is the kirby color so that  $kc(L_i)=\Omega(t_i, 1)$ with $t_i=\omega([m_i])$ for $1\leq i \leq r$.

\end{defn}

Here we show that the invariant above can be reformulated as follows.

\begin{prop}
\label{refine}
Let $L=L_1\cup L_2\cup \cdots \cup L_r$ be a computable surgery presentation of $(M, \omega)$ as above, and let $t_i=\omega([m_i])$ for $i=1, 2,\cdots, r$. Then
$$\Delta (M, \omega)=(-1)^{\sigma_{+}(L)}\prod_{i=1}^r d(t_i) \,\Delta_{c} (L), $$
where $\displaystyle d(t_i)=\frac{1}{t_i^2-t_i^{-2}}$ and $c$ is the color of $L$ defined by $c(L_i)=(t_i, 0)$.
\end{prop}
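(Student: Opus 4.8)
The plan is to prove the equivalent identity $\Delta_{kc}(L) = 2^r \prod_{i=1}^r d(t_i)\,\Delta_c(L)$; dividing this by $2^r(-1)^{\sigma_+(L)}$ and using $(-1)^{-\sigma_+(L)} = (-1)^{\sigma_+(L)}$ then yields the proposition. First I would expand $\Delta_{kc}(L)$ according to the definition of the Kirby color. Each component $L_i$ is replaced by $d(t_i)$ times the strand colored $(t_i,1)$ with its given orientation, plus $d(t_i^{-1})$ times the strand colored $(t_i^{-1},1)$ with the orientation reversed. Expanding all $r$ components gives a sum over sign vectors $\epsilon = (\epsilon_1,\dots,\epsilon_r) \in \{\pm 1\}^r$,
$$\Delta_{kc}(L) = \sum_{\epsilon}\left(\prod_i d(t_i^{\epsilon_i})\right)\Delta_{c_\epsilon}(L^{(\epsilon)}),$$
where $L^{(\epsilon)}$ is $L$ with the orientation of $L_i$ reversed exactly when $\epsilon_i = -1$, and $c_\epsilon(L_i) = (t_i^{\epsilon_i},1)$. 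Using $d(t^{-1}) = -d(t)$ I factor out $\prod_i d(t_i)$ and pick up a sign $\prod_i\epsilon_i$, reducing the task to showing $\sum_\epsilon(\prod_i\epsilon_i)\Delta_{c_\epsilon}(L^{(\epsilon)}) = 2^r\Delta_c(L)$.

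Next I would reduce every summand to $\Delta_c(L)$ up to an explicit monomial. Applying Lemma~\ref{weight} on the reoriented link $L^{(\epsilon)}$ converts the weight-$1$ color $c_\epsilon$ into the weight-$0$ color with the same multiplicities, at the cost of $\prod_i (t_i^{\epsilon_i})^{-2\sum_j lk^{(\epsilon)}_{ij}}$. Here the framings are unchanged, $lk^{(\epsilon)}_{ii} = lk_{ii}$, while $lk^{(\epsilon)}_{ij} = \epsilon_i\epsilon_j\, lk_{ij}$ for $i\neq j$, since reversing a component flips each of its off-diagonal linking numbers. I then apply Lemma~\ref{inverse} once for each index with $\epsilon_i = -1$: reversing that component back to its original orientation while simultaneously inverting its multiplicity $t_i^{-1}\mapsto t_i$ restores the weight-$0$ color $c$ on the original link $L$ and contributes a sign $(-1)^{n_\epsilon}$ with $n_\epsilon = \#\{i:\epsilon_i=-1\}$. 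Since $(\prod_i\epsilon_i)(-1)^{n_\epsilon} = 1$, each summand collapses to $M_\epsilon\,\Delta_c(L)$ for the monomial $M_\epsilon = \prod_i (t_i^{\epsilon_i})^{-2\sum_j lk^{(\epsilon)}_{ij}}$.

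The crux is to show $M_\epsilon = 1$ for every $\epsilon$, and this is where \emph{computability} and the hypothesis that $\omega$ is a genuine homomorphism enter. Because $L$ is a surgery presentation, $H_1(M;\mathbb{Z})$ is presented by the linking matrix, so for each $i$ the relation $\sum_j lk_{ij}[m_j] = 0$ holds in $H_1(M)$; applying $\omega$ and writing $t_j = \omega([m_j])$ yields the constraint $\prod_j t_j^{lk_{ij}} = 1$ for every $i$. Using $\epsilon_i^2 = 1$ I would split $M_\epsilon$ into a diagonal part $\prod_i t_i^{-2\epsilon_i lk_{ii}}$ and an off-diagonal part $\prod_i\prod_{j\neq i} t_i^{-2\epsilon_j lk_{ij}}$. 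Regrouping the latter by the index $j$ that carries $\epsilon_j$ and using symmetry of the linking matrix together with the constraint gives $\prod_{i\neq j} t_i^{lk_{ij}} = t_j^{-lk_{jj}}$, so the off-diagonal part equals $\prod_j t_j^{2\epsilon_j lk_{jj}}$, which is exactly the inverse of the diagonal part; hence $M_\epsilon = 1$. Consequently each of the $2^r$ summands equals $\Delta_c(L)$, their sum is $2^r\Delta_c(L)$, and the proposition follows. I expect this last monomial bookkeeping—tracking how the signs $\epsilon_i$ interact with the linking matrix and collapse under the homological constraint—to be the main point requiring care; the two reduction steps are direct applications of Lemmas~\ref{weight} and \ref{inverse}, provided the orientation reversal built into the Kirby color is handled correctly (without it, Lemma~\ref{inverse} would not apply and the summands would not collapse to $\Delta_c(L)$).
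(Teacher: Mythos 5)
Your proposal is correct and follows essentially the same route as the paper: expand the Kirby color over sign vectors, pull out $\prod_i d(t_i)$ via $d(t^{-1})=-d(t)$, reduce each summand to $\Delta_c(L)$ by Lemma~\ref{weight} followed by Lemma~\ref{inverse}, and kill the resulting monomial using the relations $\prod_j t_j^{lk_{ij}}=1$ imposed by the linking matrix presentation of $H_1(M)$. The only difference is cosmetic bookkeeping in showing $M_{\boldsymbol\epsilon}=1$ (you split off the diagonal terms, while the paper absorbs them directly into the grouped product), so the two arguments coincide in substance.
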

\begin{proof}
For each component $L_i$ of $L$, let $-L_i$ be the same knot with reversed orientation. For $\boldsymbol {\epsilon}=(\epsilon_1, \epsilon_2, \cdots, \epsilon_r)\in \{1, -1\}^r$, let $\boldsymbol {\epsilon}L=\epsilon_1L_1\cup \epsilon_2L_2 \cup \cdots \cup \epsilon_rL_r$, where $\epsilon_i L_i=L_i$ (resp. $\epsilon_i L_i=-L_i$) if $\epsilon_i=1$ (resp. $\epsilon_i=-1$), and let $\boldsymbol {\epsilon}c$ be the color of $\boldsymbol {\epsilon}L$ so that $\boldsymbol {\epsilon}c(\epsilon_i L_i)=(t_i^{\epsilon_i}, 1)$. We further define $\boldsymbol {\epsilon}c_0$ to be the color of $\boldsymbol {\epsilon}L$ so that $\boldsymbol {\epsilon}c_0(\epsilon_i L_i)=(t_i^{\epsilon_i}, 0)$. Then by Lemma~\ref{weight}, we have
\begin{eqnarray*}
\Delta_{\boldsymbol {\epsilon}c}(\boldsymbol {\epsilon}L)&=&\prod_{i=1}^{r}t_i^{-2\epsilon_i(\sum_{j=1}^r lk(\epsilon_i L_i, \epsilon_j L_j))}\Delta_{\boldsymbol {\epsilon}c_0}(\boldsymbol {\epsilon}L),
\end{eqnarray*}
where $lk(\epsilon_i L_i, \epsilon_j L_j))$ denotes the linking number of $\epsilon_i L_i$ and $\epsilon_j L_j$ for $i\neq j$ and the framing of $\epsilon_i L_i$ when $i=j$. 
Note that 
\begin{eqnarray*}
\prod_{i=1}^{r}t_i^{-2\epsilon_i(\sum_{j=1}^r lk(\epsilon_i L_i, \epsilon_j L_j))}=\prod_{i=1}^{r}t_i^{-2\epsilon_i(\sum_{j=1}^r \epsilon_i\epsilon_j  lk( L_i,  L_j))}
= (\prod_{i=1}^{r}\prod_{j=1}^{r}t_i^{ \epsilon_j  lk( L_i,  L_j)})^{-2}\\
=\left ( \prod_{j=1}^{r}(\prod_{i=1}^{r}t_i^{ lk( L_i,  L_j)})^{\epsilon_j}\right )^{-2}=\left (\prod_{j=1}^{r}\omega^{\epsilon_j}(\prod_{i=1}^{r}[m_i]^{ lk( L_i,  L_j)})\right )^{-2}=\left (\prod_{j=1}^{r}\omega^{\epsilon_j}(1)\right )^{-2}=1. 
\end{eqnarray*}
Here we use the fact that $\prod_{i=1}^{r}[m_i]^{ lk( L_i,  L_j)}=1\in H_1(M, \mathbb{Z})$ for any $1\leq j \leq r$. As a result, we have $\Delta_{\boldsymbol {\epsilon}c}(\boldsymbol {\epsilon}L)=\Delta_{\boldsymbol {\epsilon}c_0}(\boldsymbol {\epsilon}L)$.

In the definition of $\Delta_{kc} (L)$, each component $L_i$ of $L$ has the Kirby color $\Omega(t_i, 1)$. Recall the definition in Section 2.2, we have
\begin{eqnarray*}
\Delta_{kc} (L)&=&\sum_{\boldsymbol {\epsilon}=(\epsilon_1, \epsilon_2, \cdots, \epsilon_r)\in \{1, -1\}^r} d(t_1^{\epsilon_1})d(t_2^{\epsilon_2})\cdots d(t_r^{\epsilon_r})\Delta_{\boldsymbol {\epsilon}c}(\boldsymbol {\epsilon}L)\\
&=&\sum_{\boldsymbol {\epsilon}\in \{1, -1\}^r} \epsilon_1\epsilon_2\cdots \epsilon_rd(t_1)d(t_2)\cdots d(t_r)\Delta_{\boldsymbol {\epsilon}c_0}(\boldsymbol {\epsilon}L)\\
&=&\sum_{\boldsymbol {\epsilon}\in \{1, -1\}^r} \epsilon_1\epsilon_2\cdots \epsilon_rd(t_1)d(t_2)\cdots d(t_r)\epsilon_1\epsilon_2\cdots \epsilon_r \Delta_{c_0}(L)\\
&=& 2^r d(t_1)d(t_2)\cdots d(t_r)\Delta_{c_0}(L),
\end{eqnarray*}
where $c_0$ is the color of $L$ so that $c_0(L_i)=(t_i, 0)$ for $i=1, 2,\cdots, r$. The third equality follows from Lemma~\ref{inverse}.

Therefore $$\Delta (M, \omega):=\frac{\Delta_{kc} (L)}{2^{r}(-1)^{\sigma_{+}(L)}}=\frac{d(t_1)d(t_2)\cdots d(t_r)\Delta_{c_0}(L)}{(-1)^{\sigma_{+}(L)}}=(-1)^{\sigma_{+}(L)}\prod_{i=1}^r d(t_i)\Delta_{c_0}(L).$$ Here $c_0$ is exactly the $c$ in the proposition.

\end{proof}

\section{Relation with Reidemeister torsion}
\subsection{Reidemeister torsion}
For a $3$-manifold $M$ and a ring homomorphism $\phi$ from $\mathbb{Z}[H_{1}(M, \mathbb{Z})]$ to a field $B$ such that $\phi(H_{1}(M, \mathbb{Z})) \neq 1$, one can define the Reidemeister-Franz torsion $\tau^{\phi}(M) \in B$, a precise definition of which can be found in \cite{MR1958479}. It is only well-defined up to multiplication of $\pm\phi(h)$ with $h\in H_{1}(M, \mathbb{Z})$. The ambiguity can be fixed by introducing a smooth Euler structure $e$ and a homology orientation $\mathcal{O}$. Namely $\tau^{\phi}(M, e, \mathcal{O})$ is a well-defined value for $(M, \phi, e, \mathcal{O})$. Since $M$ is an oriented 3-manifold, we can choose the natural homology orientation of $M$. 

When $M$ is obtained by surgery on a framed oriented link $L=L_1\cup L_2\cup \cdots \cup L_r$, Turaev found a surgery formula for the  Reidemeister-Franz torsion of $M$, which can be found in VIII.2 of \cite{MR1958479}. 

\begin{theo}[Turaev]
\label{turaev}
Let $L=L_1\cup L_2\cup \cdots \cup L_r$ be a surgery presentation of a $3$-manifold $M$.
For a ring homomorphism $\phi$ from $\mathbb{Z}[H_{1}(M, \mathbb{Z})]$ to a field $B$ and a Euler structure $e$, we have 
$$\tau^{\phi}(M, e, \mathcal{O})=(-1)^{\sigma_{+}(L)}\prod_{i=1}^r \frac{1}{(t_i-1)}\cdot t_1^{k_1/2}t_2^{k_2/2}\cdots t_r^{k_r/2} \nabla(L)(t_1^{1/2}, t_2^{1/2}, \cdots, t_r^{1/2}).$$
Here $t_i=\phi([m_i])$, $k=(k_1, k_2, \cdots, k_r)$ is the charge of $L$ corresponding to the Euler structure $e$, and $\mathcal{O}$ is the natural homology orientation of $M$.
\end{theo}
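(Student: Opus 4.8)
The plan is to realize $M$ by Dehn filling the link exterior and to transport the Reidemeister torsion through this gluing. Write $X_L=S^3\setminus \mathring{N}(L)$ for the exterior of $L$, with boundary tori $T_i=\partial N(L_i)$, so that $M=X_L\cup(V_1\cup\cdots\cup V_r)$, where each $V_i$ is a solid torus attached along $T_i$ with its meridian identified with the surgery slope. Since $L$ is a computable surgery presentation, $\phi([m_i])=t_i\neq 1$, so every $T_i$ is $\phi$-acyclic and, by Milnor's product formula, has trivial torsion. The multiplicativity of torsion under gluing then gives
$$\tau^{\phi}(M)=\pm\,\tau^{\phi}(X_L)\cdot\prod_{i=1}^{r}\tau^{\phi}(V_i),$$
with a sign and a monomial still to be determined from the homology orientation and the Euler structure.

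I would first dispatch the solid tori. Each $V_i$ deformation retracts onto its core circle $c_i$, so $\tau^{\phi}(V_i)=(\phi([c_i])-1)^{-1}$. Because the meridian $m_i$ of $L_i$ meets the surgery slope transversely in a single point, one may take $c_i=m_i$; hence $\phi([c_i])=t_i$ and the solid tori contribute exactly $\prod_{i=1}^{r}(t_i-1)^{-1}$, the first factor of the formula. For the exterior I would invoke the Milnor--Turaev computation of the torsion of a link complement: with respect to the homomorphism induced by $\phi$ on $H_1(X_L)=\mathbb{Z}^r$, the torsion $\tau^{\phi}(X_L)$ equals the Conway function $\nabla(L)$ in its symmetric normalization, up to a monomial. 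This is the several-variable refinement of Milnor's identity $\tau(S^3\setminus K)=\Delta_K(t)/(t-1)$ for a knot, and it supplies the factor $\nabla(L)(t_1^{1/2},\ldots,t_r^{1/2})$.

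It remains to fix the normalization. The half-integer powers $t_i^{1/2}$ are forced by the duality symmetry of Reidemeister torsion under $t_i\mapsto t_i^{-1}$, which the Conway function carries only in its symmetric normalization. The monomial $t_1^{k_1/2}\cdots t_r^{k_r/2}$ records the resolution of the $\pm\phi(h)$ indeterminacy of the torsion by the smooth Euler structure $e$: comparing the Euler structure carried by the surgery decomposition with $e$ produces a class in $H_1(M,\mathbb{Z})$ whose image under $\phi$ is this monomial, with exponent vector the charge $k$. The congruence constraints on $k$ imposed by the linking matrix are exactly what guarantee that the half-integer powers in the monomial and inside $\nabla$ combine into a genuine element of $B$.

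The principal obstacle is the sign $(-1)^{\sigma_{+}(L)}$ together with the accompanying orientation bookkeeping. This sign is the discrepancy between the natural homology orientation $\mathcal{O}$ of the oriented manifold $M$ and the homology orientation produced by the Mayer--Vietoris sequence of the surgery decomposition. Comparing the two amounts to orienting the homology of the $4$-dimensional surgery trace obtained by attaching $2$-handles to $B^4$ along $L$, whose intersection form is the linking matrix of $L$; the positive part of this form is responsible for $(-1)^{\sigma_{+}(L)}$. Making this comparison precise, and simultaneously confirming that $k$ is the correct charge vector, is where essentially all the difficulty concentrates --- once the boundary tori are known to be $\phi$-acyclic, the gluing identity itself is routine.
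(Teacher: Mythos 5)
The paper does not actually prove this statement: it is quoted from VIII.2 of \cite{MR1958479} as a known result of Turaev, so there is no in-paper argument to compare against. Judged on its own terms, your outline follows the standard route (and the one Turaev's proof is built on): split $M$ into the link exterior $X_L$ and the surgery solid tori, use $\phi$-acyclicity of the boundary tori and multiplicativity of torsion, identify $\tau^{\phi}(X_L)$ with the Conway function via the Milnor--Turaev theorem, and identify each solid-torus contribution with $(t_i-1)^{-1}$. These steps are sound, with two caveats: the hypothesis $t_i=\phi([m_i])\neq 1$ is genuinely required for the theorem (and for the acyclicity of the tori) but is not in the statement as given --- you import it from the word ``computable'', which belongs to Section 3, not to this theorem; and when the $\phi$-twisted homology of $X_L$ or $M$ is nonzero both sides vanish by convention, so naive multiplicativity must be replaced by the Mayer--Vietoris torsion identity.

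The genuine gap is that the two ingredients which give the theorem its content --- the sign $(-1)^{\sigma_{+}(L)}$ and the charge monomial $t_1^{k_1/2}\cdots t_r^{k_r/2}$ --- are precisely the ones you set aside as ``where essentially all the difficulty concentrates.'' What your argument actually establishes is the classical identity $\tau^{\phi}(M)\doteq\prod_{i}(t_i-1)^{-1}\,\nabla(L)(t_1^{1/2},\dots,t_r^{1/2})$ up to multiplication by $\pm\phi(h)$, $h\in H_1(M,\mathbb{Z})$; but removing exactly that indeterminacy is the point of the sign-refined, Euler-structure-decorated torsion $\tau^{\phi}(M,e,\mathcal{O})$. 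To close the gap one must (a) compare the homology orientation induced by the surgery decomposition (equivalently, by the $4$-manifold $W_L$ whose intersection form is the linking matrix) with the natural orientation $\mathcal{O}$, which is where $(-1)^{\sigma_{+}(L)}$ is actually computed, and (b) match the Euler structure carried by the handle decomposition against $e$ through the charge $k$, verifying the parity constraints on $k$ (in terms of the linking numbers) that make $t^{k/2}\nabla(L)(t^{1/2})$ a single-valued element of $B$. Neither step is routine, and together they occupy most of VIII.2 of \cite{MR1958479}; as written, your proposal is a correct strategy with the decisive normalization left unproved.
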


\begin{rem}
For a framed link $L=L_1\cup L_2\cup \cdots \cup L_r$, as stated in VII.2 of \cite{MR1958479}, the product $t_1^{k_1/2}t_2^{k_2/2}\cdots t_r^{k_r/2} \nabla(L)(t_1^{1/2}, t_2^{1/2}, \cdots, t_r^{1/2})$ is a rational function of $t_1, t_2, \cdots, t_r$. After replacing $t_i$ with $\phi([m_i])$ in both numerator and denominator, we get an element of $B$. The formula in Theorem \ref{turaev} should be understood in this way, where we abuse the notation $t_i$. 
\end{rem}

\subsection{Relation}
The following proposition states a relation between $\Delta(M, \omega)$ and the Reidemeister torsion. For a $3$-manifold $M$ and a $1$-palette $(B, G)$, a non-trivial cohomology class $\omega: H_{1}(M, \mathbb{Z})\to G$ can be extended linearly to a ring homomorphism $\phi_{\omega}: \mathbb{Z}[H_{1}(M, \mathbb{Z})] \to B$. Suppose $p$ is the homomorphism from $G$ to itself which sends $g\in G$ to $g^4$. The homomorphism $p \circ \omega: H_{1}(M, \mathbb{Z})\to G$ can be extended to a ring homomorphism $\phi_{p\circ \omega}: \mathbb{Z}[H_{1}(M, \mathbb{Z})] \to B$.

\begin{prop}
We have
\begin{eqnarray*}
\tau^{\phi_{p\circ \omega}}(M, e, \mathcal{O})=[\omega(c(e))]^2 \Delta (M, \omega),
\end{eqnarray*}
where $c(e)\in H_1(M, \mathbb{Z})$ is the homology class so that $e=c(e)e^{-1}$.
\end{prop}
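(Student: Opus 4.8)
The plan is to evaluate Turaev's surgery formula (Theorem~\ref{turaev}) at the ring homomorphism $\phi_{p\circ\omega}$ and to compare the outcome with the reformulation of $\Delta(M,\omega)$ supplied by Proposition~\ref{refine}. The reason for composing $\omega$ with the fourth-power map $p$ is exactly to make the square-root variables of Turaev's formula agree with the squared variables of Lemma~\ref{conway}: since $\phi_{p\circ\omega}([m_i])=\omega([m_i])^4=t_i^4$, one may take $\phi_{p\circ\omega}([m_i])^{1/2}=t_i^2$, and then the Conway factor is $\nabla(L)(t_1^2,\dots,t_r^2)$, which by Lemma~\ref{conway} equals $\Delta_c(L)$ for the color $c(L_i)=(t_i,0)$.

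Carrying out the substitution in Theorem~\ref{turaev} I would obtain
\begin{equation*}
\tau^{\phi_{p\circ\omega}}(M,e,\mathcal{O})=(-1)^{\sigma_+(L)}\prod_{i=1}^r\frac{1}{t_i^4-1}\cdot\prod_{i=1}^r t_i^{2k_i}\cdot\Delta_c(L).
\end{equation*}
On the other hand, writing $d(t_i)=\dfrac{1}{t_i^2-t_i^{-2}}=\dfrac{t_i^2}{t_i^4-1}$, Proposition~\ref{refine} gives
\begin{equation*}
\Delta(M,\omega)=(-1)^{\sigma_+(L)}\prod_{i=1}^r\frac{t_i^2}{t_i^4-1}\,\Delta_c(L).
\end{equation*}
Dividing the first expression by the second, the sign, the denominators $t_i^4-1$ and the factor $\Delta_c(L)$ cancel, so that
\begin{equation*}
\tau^{\phi_{p\circ\omega}}(M,e,\mathcal{O})=\Big(\prod_{i=1}^r t_i^{\,2(k_i-1)}\Big)\,\Delta(M,\omega).
\end{equation*}
Since $t_i=\omega([m_i])$, the prefactor equals $\omega\big(\sum_{i=1}^r(k_i-1)[m_i]\big)^2$, and the proposition is therefore reduced to the purely homological identity
\begin{equation*}
c(e)=\sum_{i=1}^r(k_i-1)[m_i]\in H_1(M,\mathbb{Z}).
\end{equation*}

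Establishing this last identity is the step I expect to be the main obstacle, since it is a matter of Turaev's bookkeeping of Euler structures and charges rather than of the Alexander polynomial. I would prove it in two stages, following VIII.2 of \cite{MR1958479} and the parallel discussion in Section~6.7 of \cite{MR3539369}. First, the charges compatible with the fixed surgery presentation form a torsor on which the charge-to-Euler-structure correspondence is affine: two charges $k,k'$ (necessarily of the same parity, so that $k-k'$ is even) produce Euler structures differing by $\sum_i\tfrac{k_i-k_i'}{2}[m_i]$. Second, the conjugation symmetry of Reidemeister torsion, combined with the symmetry of the Conway function under $t_i\mapsto t_i^{-1}$, identifies the involution $e\mapsto e^{-1}$ on the charge side with $k\mapsto(2-k_1,\dots,2-k_r)$. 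Applying the affine rule to $c(e)=e\cdot(e^{-1})^{-1}$ then yields $c(e)=\sum_i\tfrac{k_i-(2-k_i)}{2}[m_i]=\sum_i(k_i-1)[m_i]$, as required; because $G$ contains no element of order $2$, the homomorphism $\omega$ kills any torsion ambiguity in this comparison, so only the squared identity $\omega(c(e))^2=\prod_i t_i^{2(k_i-1)}$ is actually needed. The remaining work is the careful matching of Turaev's sign and orientation conventions with ours.
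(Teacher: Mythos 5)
Your proposal is correct and follows essentially the same route as the paper: substitute $\phi_{p\circ\omega}([m_i])=t_i^4$ into Turaev's surgery formula, identify $\nabla(L)(t_1^2,\dots,t_r^2)$ with $\Delta_c(L)$ via Lemma~\ref{conway}, compare with Proposition~\ref{refine} to extract the prefactor $\prod_i t_i^{2(k_i-1)}$, and then invoke the charge/Euler-structure identity $\prod_i t_i^{k_i-1}=\omega(c(e))$. The only difference is that the paper simply cites VI.2.3 of Turaev's book for that last identity, whereas you sketch a derivation of it; your sketch is consistent with Turaev's conventions but is not needed beyond the citation.
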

\begin{proof}
Let $L=L_1\cup L_2\cup \cdots \cup L_r$ be a computable surgery presentation of $M$, and let $t_i=\omega([m_i])$ for $1\leq i \leq r$.
By Theorem \ref{turaev}, Lemma \ref{conway} and Proposition \ref{refine}, we have
\begin{eqnarray*}
\tau^{p\circ \omega}(M, e, \mathcal{O})&=& (-1)^{\sigma_{+}(L)}\prod_{i=1}^r \frac{1}{(t_i^4-1)}\cdot t_1^{2k_1}t_2^{2k_2}\cdots t_r^{2k_r} \nabla(L)(t_1^{2}, t_2^{2}, \cdots, t_r^{2})\\
&=& (-1)^{\sigma_{+}(L)}\prod_{i=1}^r \frac{t_i^{2(k_i-1)}}{(t_i^{2}-t_i^{-2})}\Delta_c(L)=\prod_{i=1}^r  t_i^{2(k_i-1)} \Delta (M, \omega).
\end{eqnarray*}
The relation $\displaystyle \prod_{i=1}^r  t_i^{(k_i-1)}=\omega(c(e))$ is discussed in VI 2.3 of \cite{MR1958479}. 
\end{proof}

\begin{rem}
\cite[Theorem 6.23]{MR3539369} showed a relation of the Reidemeister torsion and the invariant $Z_2$, and thus provided a normalization of Reidemeister torsion. The invariant $\Delta (M, \omega)$ can also be regarded as a normalization, but we have a deficit that only the Reidemeister torsions corresponding to special type of ring homormorphisms can be considered.
\end{rem}

\section{Lens spaces}
Using a similar discussion as in \cite[Prop. 6.24]{MR3539369}, we calculate $\Delta (M, \omega)$ when $M$ is a lens space. Let $(B, G)$ be a $1$-palette for which $G$ has free summand but has no element of order $2$. 

\begin{prop}
\label{lensvalue}
Let $p>q>0$ be two coprime integers and consider the lens space $L(p, q)$. Consider a $1$-palette $(B, G)$ for which there is a non-trivial cohomology class $\omega: \mathbb{Z}/ p\mathbb{Z} \to G$. Using the surgery presentation in Example \ref{lens} to compute $\Delta (L(p, q), \omega)$, we have 
\begin{eqnarray*}
\Delta (L(p, q), \omega)&=&(-1)^n\frac{1}{t_1^2-t_1^{-2}}\frac{1}{t_n^2-t_n^{-2}}\\&=&-\frac{1}{t_n^2-t_n^{-2}} \frac{1}{t_n^{2q}-t_n^{-2q}},
\end{eqnarray*}
where $t_i=\omega([m_i])$ for $i=1, n$.
\end{prop}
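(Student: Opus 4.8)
The plan is to evaluate Proposition~\ref{refine} on the chain-link presentation of Example~\ref{lens}, and then to isolate the two signs that occur — one from the signature of the linking matrix, one from the homological comparison of the first and last meridians — and verify that together they produce the single $-1$ of the statement. I would begin by fixing the presentation: write the continued fraction $\frac{p}{q}=a_1-\cfrac{1}{a_2-\cfrac{1}{\ddots-\cfrac{1}{a_n}}}$ with all $a_i\ge 2$, and let $L=L_1\cup\cdots\cup L_n$ be the chain of Example~\ref{lens} with framings $a_1,\dots,a_n$, a surgery presentation of $L(p,q)$. For this to be computable one needs $\omega([m_i])\neq 1$ for all $i$, which I would read off from the meridian description obtained below (it holds automatically when $\omega$ is faithful, since then each $[m_i]$ is a generator raised to $\pm P_{i-1}$ with $0<P_{i-1}<p$); note also that, as $G$ has no element of order $2$, the condition $t_i\neq 1$ already forces $t_i^2-t_i^{-2}\neq 0$, so every denominator below is valid.

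For the first displayed line I would substitute $\Delta_c(L)=\prod_{i=2}^{n-1}(t_i^2-t_i^{-2})$ from Example~\ref{lens} into Proposition~\ref{refine}. As $\prod_{i=1}^{n}d(t_i)=\prod_{i=1}^{n}(t_i^2-t_i^{-2})^{-1}$, the interior factors cancel and leave
\[
\Delta(L(p,q),\omega)=(-1)^{\sigma_+(L)}\,\frac{1}{t_1^2-t_1^{-2}}\,\frac{1}{t_n^2-t_n^{-2}}.
\]
It then suffices to check $\sigma_+(L)=n$. The linking matrix is tridiagonal with diagonal $a_1,\dots,a_n$ and off-diagonal entries $\pm 1$; its leading principal minors obey $D_k=a_kD_{k-1}-D_{k-2}$ (the off-diagonal entries enter squared), which is the recurrence for the continued-fraction numerators $P_k$, so $D_k=P_k$ is positive and increasing with $D_n=p$. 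Hence the matrix is positive definite and $\sigma_+(L)=n$, giving the first line.

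For the second line I would compute the meridians in $H_1(L(p,q))=\mathbb{Z}/p\mathbb{Z}$. Solving the surgery relations $\sum_j lk_{ij}[m_j]=0$ recursively and absorbing the signs of the off-diagonal entries into a factor $(-1)^{k-1}$, the recurrence again becomes the $P$-recurrence and yields $[m_i]=(-1)^{i-1}P_{i-1}[m_1]$, in particular $[m_n]=(-1)^{n-1}P_{n-1}[m_1]$. The continued-fraction identity $P_{n-1}Q_n-P_nQ_{n-1}=1$ with $P_n=p$, $Q_n=q$ gives $P_{n-1}q\equiv 1\pmod p$, so inverting yields $[m_1]=(-1)^{n-1}q\,[m_n]$. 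Applying $\omega$, I get $t_1=t_n^{(-1)^{n-1}q}$, hence $t_1^2-t_1^{-2}=(-1)^{n-1}\bigl(t_n^{2q}-t_n^{-2q}\bigr)$.

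Substituting this into the first line, the signature sign $(-1)^{n}$ and the comparison sign $(-1)^{n-1}$ multiply to $(-1)^{2n-1}=-1$, independently of the parity of $n$, which yields $-\tfrac{1}{t_n^2-t_n^{-2}}\tfrac{1}{t_n^{2q}-t_n^{-2q}}$ as claimed. I expect the main obstacle to be exactly this coordinated sign bookkeeping: one must fix the orientation and linking-number conventions of the diagram in Figure~\ref{fig1} so that the off-diagonal signs entering $\sigma_+(L)=n$ and those entering the meridian relation $[m_1]=(-1)^{n-1}q[m_n]$ are mutually consistent, so that the two parity contributions cancel to the lone $-1$. Once the signs are pinned down, the remainder is a direct assembly of Proposition~\ref{refine}, Example~\ref{lens}, and standard continued-fraction identities.
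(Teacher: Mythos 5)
Your proof is correct and follows essentially the same route as the paper: apply Proposition~\ref{refine} together with Example~\ref{lens}, observe that the tridiagonal linking matrix is positive definite so $\sigma_+(L)=n$, and then convert $t_1$ into a power of $t_n$ via the surgery relations to merge the two parity signs into the single $-1$. The only (harmless) difference is that you derive the key relation $t_1=t_n^{(-1)^{n-1}q}$ self-containedly from the continued-fraction numerator recurrence and the identity $P_{n-1}q\equiv 1\pmod p$, whereas the paper sets up the recursion $c_i=-a_{i+1}c_{i+1}-c_{i+2}$ and cites the computation $c_1=(-1)^{n-1}q$ from the literature.
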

\begin{proof}
Consider a continued fraction $p/q=[a_{1}, a_{2}, \cdots, a_{n}]$ where each $a_i\geq 2$. Namely,
$$p/q=a_{1}-\cfrac{1}{a_{2}-\cfrac{1}{a_{3}-\cdots-\cfrac{1}{a_{n}}}}.$$
Then the surgery presentation in Fig. \ref{fig1} provides the lens space $L(p, q)$, and the linking matrix of the surgery presentation is 
$$\begin{pmatrix}
a_1 & 1 &&  &\\
1 & a_2 & 1 &\text{\huge{0}}  & \\
 & \ddots & \ddots & \ddots& \\
 &   & \ddots & \ddots& 1\\
 & \text{\huge{0}} &  & 1 & a_n\\
\end{pmatrix},$$ which is positive definite.

By Proposition~\ref{refine} and Example~\ref{lens}, we have
\begin{eqnarray*}
\Delta (L(p, q), \omega)=(-1)^{n}\prod_{i=1}^n d(t_i)\Delta_c(L)&=&(-1)^{n}\prod_{i=1}^n d(t_i)\prod_{i=2}^{n-1}(t_i^2-t_i^{-2})\\
&=&(-1)^n\frac{1}{t_1^2-t_1^{-2}}\frac{1}{t_n^2-t_n^{-2}},
\end{eqnarray*} 
where the color $c$ is defined by $c(L_i)=(t_i, 0)=(\omega([m_i]), 0)$ for $i=1, 2, \cdots, n$.

From the linking matrix, we see that $t_i$'s satisfy the following relations $t_{i-1}t_i^{a_i}t_{i+1}=1$ for $1\leq i \leq n$ if we define $t_0=t_{n+1}=1$. From the relations we see that $t_1=t_n^{(-1)^{n-1}q}$. 
Indeed, define $c_{n+1}=0, c_n=1$ and recursively $c_i=-a_{i+1}c_{i+1}-c_{i+2}$ for $0\leq i \leq n-1$. Form the relations $t_{i-1}t_i^{a_i}t_{i+1}=1$ we see that $t_i=t_n^{c_i}$ for $1\leq i \leq n$. The calculation of $c_1$ has been given by \cite[Prop. 6.24]{MR3539369}, which is $c_1=(-1)^{n-1}q$.

Therefore we have
$$\Delta (L(p, q), \omega)=(-1)^n\frac{1}{t_n^2-t_n^{-2}}\frac{1}{t_n^{(-1)^{n-1}2q}-t_n^{-(-1)^{n-1}2q}}=-\frac{1}{t_n^2-t_n^{-2}} \frac{1}{t_n^{2q}-t_n^{-2q}}.$$

\end{proof}

The classification of lens spaces was first given by Reidemeister \cite{MR3069647} and Franz \cite{MR1581473} by using Reidemeister torsions. Brody \cite{MR116336} and more recently Przytychi and Yasuhara \cite{MR1988423} also provided different proofs. In \cite{MR3539369}, the non-semi-simple quantum invariant $Z_2$ is used to classify lens spaces. In the remaining part, we want to explore the classification problem using $\Delta (M, \omega)$.

\begin{theo}[Classification of lens spaces]
Let $(p,q)$ and $(p, q)$ be two pairs of coprime integers with $p>0$. The lens spaces $L(p, q)$ and $L(p, q')$ are orientation-preservingly diffeomorphic if and only if either $q \equiv q'  \mod p$ or $qq' \equiv 1 \mod p$.
\end{theo}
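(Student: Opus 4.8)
The plan is to use the invariant $\Delta(L(p,q),\omega)$ computed in Proposition~\ref{lensvalue} as an orientation-preserving diffeomorphism invariant and extract from it the classical number-theoretic conditions. The "if" direction is the standard half: when $q\equiv q'\bmod p$ the lens spaces are literally presented by the same surgery data, and when $qq'\equiv 1\bmod p$ one obtains $L(p,q')$ from $L(p,q)$ by reversing the roles of the two solid tori in the genus-one Heegaard splitting, which is an orientation-preserving diffeomorphism; I would simply recall these two geometric facts rather than reprove them. The substance is the "only if" direction, where I want to show that an orientation-preserving diffeomorphism $L(p,q)\cong L(p,q')$ forces $q'\equiv q^{\pm1}\bmod p$.

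First I would fix the algebraic setup so that $\Delta$ becomes a genuine invariant of the pair. Since $H_1(L(p,q),\mathbb{Z})\cong\mathbb{Z}/p\mathbb{Z}$, I choose $G$ to contain a primitive $p$-th root of unity $\zeta$ (this is where I need $p$ to be odd, or more precisely that $G$ has no element of order $2$, so that the $1$-palette hypothesis is met) and take $\omega$ to send a chosen generator to $\zeta$. An orientation-preserving diffeomorphism $f:L(p,q)\to L(p,q')$ induces an isomorphism $f_*:\mathbb{Z}/p\mathbb{Z}\to\mathbb{Z}/p\mathbb{Z}$, i.e.\ multiplication by some unit $a\in(\mathbb{Z}/p\mathbb{Z})^\times$, and naturality of the construction gives $\Delta(L(p,q),\omega)=\Delta(L(p,q'),\omega\circ f_*^{-1})$. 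The key point is that $\omega\circ f_*^{-1}$ is again a non-trivial cohomology class on $L(p,q')$, differing from a standard choice only by the automorphism $a$, so I may read off a relation between the two closed-form expressions from Proposition~\ref{lensvalue}.

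Next I would substitute the explicit formula. Writing $t=\zeta$ for the generator-meridian value on $L(p,q)$ and $s$ for the corresponding value on $L(p,q')$, Proposition~\ref{lensvalue} gives
\begin{eqnarray*}
\Delta(L(p,q),\omega)=-\frac{1}{t^2-t^{-2}}\,\frac{1}{t^{2q}-t^{-2q}},
\end{eqnarray*}
and similarly for $(p,q')$ with $s$ in place of $t$. The diffeomorphism relation forces $s=t^a$ for the induced unit $a$, so after clearing the common $\omega$ I obtain an equality of the form
\begin{eqnarray*}
(t^2-t^{-2})(t^{2q}-t^{-2q})=(t^{2a}-t^{-2a})(t^{2aq'}-t^{-2aq'})
\end{eqnarray*}
valid for $t=\zeta$ a primitive $p$-th root of unity. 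The final step is the number theory: I interpret each side as the image of a product of cyclotomic-type expressions and argue that such an equality of the two unordered pairs $\{\pm2,\pm2q\}$ and $\{\pm2a,\pm2aq'\}$ of exponents modulo $2p$ (equivalently of $\{1,q\}$ and $\{a,aq'\}$ modulo $p$ up to sign) forces either $a\equiv1,\ aq'\equiv q$ or $a\equiv q,\ aq'\equiv1$ modulo $p$; the first yields $q'\equiv q$ and the second yields $qq'\equiv1$.

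The main obstacle I anticipate is the last step, namely justifying rigorously that the equality of the two expressions at a single primitive root of unity $\zeta$ compels the equality of exponent multisets, rather than some accidental coincidence. I expect to handle this by letting $\zeta$ range over all primitive $p$-th roots of unity (Galois conjugation), upgrading the pointwise equality to an identity of symmetric rational functions in an indeterminate $t$, and then comparing the zero/pole structure of $\frac{1}{(t^2-t^{-2})(t^{2q}-t^{-2q})}$ on the $p$-th roots of unity; the divisor of this function records exactly the set $\{1,q\}\bmod p$ up to sign, so matching divisors yields the arithmetic conclusion. A secondary technical point is ensuring that the induced unit $a$ is itself constrained (an orientation-preserving self-map of $\mathbb{Z}/p\mathbb{Z}$ can in principle be any unit), but this freedom is precisely what produces the two alternatives $q'\equiv q^{\pm1}$, so rather than an obstruction it is the source of the disjunction in the statement.
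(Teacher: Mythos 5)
Your overall strategy coincides with what the paper actually does (it states the classification theorem with references to Reidemeister, Franz, Brody and Przytycki--Yasuhara, and then proves only the ``only if'' direction for \emph{odd} $p$ as a separate proposition using $\Delta$), but the step you flag as the main obstacle is in fact a genuine gap, and your proposed fix does not work. After Galois conjugation you have the equality
$(\xi^2-\xi^{-2})(\xi^{2q}-\xi^{-2q})=(\xi^{2a}-\xi^{-2a})(\xi^{2aq'}-\xi^{-2aq'})$
only at the primitive $p$-th roots of unity $\xi$, not as an identity of rational functions in an indeterminate $t$; so there is no ``zero/pole structure'' to compare. Worse, at such a $\xi$ none of the factors $\xi^{2i}-\xi^{-2i}$ with $p\nmid i$ vanish: both sides are products of nonzero cyclotomic units, and deducing from a multiplicative relation among the numbers $\xi^{i}-1$ that the exponent multisets agree is precisely the content of Franz's independence lemma (Lemma~\ref{franz} in the paper), a nontrivial result proved with Dirichlet characters. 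The paper's route is: equate $\Delta(L(p,q),\omega_k)$ with $\Delta(L(p,q'),\omega'_{kc})$ for \emph{all} $k$, multiply each identity by its complex conjugate to get $\prod(1-\xi^{\pm 4k})(1-\xi^{\pm 4kq})=\prod(1-\xi^{\pm 4kc})(1-\xi^{\pm 4kcq'})$, observe that for odd $p$ the exponents $4k$ sweep out all nonzero residues, and only then invoke Franz's lemma to conclude $\{\pm1,\pm q\}=\{\pm c,\pm cq'\}$ in $\mathbb{Z}_p$. You would need to import that lemma (or an equivalent); it cannot be replaced by a divisor argument. A second, smaller gap: the multiset equality only gives $q'\equiv\pm q$ or $qq'\equiv\pm1$, and you must return to the original (unconjugated) identity to eliminate the minus signs, as the paper does via $\eta^2-\eta^{-2}=\eta^{2qq'}-\eta^{-2qq'}$.

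Separately, your proof cannot establish the theorem as stated, because the $\Delta$-based argument is intrinsically limited to odd $p$: a primitive $p$-th root of unity for even $p$ generates a group with $2$-torsion, which the $1$-palette hypothesis forbids, and the paper's own remark exhibits $L(12,5)$ and $L(12,11)$ as non-homeomorphic lens spaces that no admissible $\Delta(M,\omega)$ can distinguish. This is why the paper downgrades the $\Delta$-based statement to ``the invariants distinguish $L(p,q)$ when $p$ is odd'' and leaves the full classification to the cited classical proofs. Your ``if'' direction and the reduction of an orientation-preserving diffeomorphism to an induced unit $a\in(\mathbb{Z}/p\mathbb{Z})^\times$ acting on the cohomology classes are fine and match the paper.
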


Franz's lemma, which we recall below, plays a key role in the proof of the classification of lens spaces.

\begin{lemma}[Franz \cite{MR1581473}]
\label{franz}
Let $p\geq 3$ be an integer and let $\mathbb{Z}_p$ be the multiplicative group of invertible elements of $\mathbb{Z}/ p\mathbb{Z}$. Suppose a sequence of integers $(a_i)_{i\in \mathbb{Z}_p}$ satisfies the following conditions
\begin{enumerate}
\item $\displaystyle \sum_{i\in \mathbb{Z}_p} a_i=0$,
\item $a_i=a_{-i}$, for all $i\in  \mathbb{Z}_p$,
\item $\displaystyle \prod_{i\in \mathbb{Z}_p} (\xi^i -1)^{a_i}=1$ for any $p$-th root of unity $\xi$.
\end{enumerate}
Then $a_i=0$ for all $i\in \mathbb{Z}_p$.
\end{lemma}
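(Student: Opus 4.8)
The plan is to take logarithms of the multiplicative relation (iii) and turn it into a \emph{linear} problem on the finite abelian group $G=(\mathbb{Z}/p\mathbb{Z})^{*}$, which I can then diagonalize by Fourier analysis (character theory) on $G$. Write $\zeta=e^{2\pi i/p}$, so that the nontrivial $p$-th roots of unity entering (iii) are the powers $\zeta^{k}$ with $k\in G$ (for prime $p$ every nontrivial root is of this form; the composite case is addressed at the end). Since the product in (iii) is the \emph{specific} complex number $1$, taking the real part of its logarithm gives, for each $k\in G$, the real relation $\sum_{i\in G}a_i\,u(ki)=0$, where $u(m):=\log|\zeta^{m}-1|$. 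Two features of $u$ should be noted at once: $u$ is real-valued, and $u(m)=u(-m)$ because $\zeta^{-m}=\overline{\zeta^{m}}$; thus $u$ is an \emph{even} function on $G$. The imaginary (argument) part of the logarithm will not be needed, because the parity hypothesis (ii) already governs the odd part of the data.

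Next I would recognize the family $\sum_i a_i u(ki)=0$ $(k\in G)$ as a convolution identity on $G$ and diagonalize it against the characters $\chi$ of $G$. Multiplying by $\chi(k)$, summing over $k$, and substituting $j=ki$ factors the whole system into the scalar relations $\hat u(\chi)\,\overline{\hat a(\chi)}=0$, one for each character $\chi$, where $\hat a(\chi)=\sum_i a_i\chi(i)$ and $\hat u(\chi)=\sum_{m\in G}\chi(m)\,u(m)$ (here I used that $a$ is integer-valued, hence real). Conditions (i) and (ii) are precisely the vanishing of $\hat a$ on the ``unwanted'' characters: hypothesis (ii) forces $\hat a(\chi)=0$ for every \emph{odd} character (those with $\chi(-1)=-1$), since the substitution $i\mapsto -i$ carries $\hat a(\chi)$ to $-\hat a(\chi)$; and on the trivial character $\hat u(\chi_{0})=\sum_{m}\log|\zeta^{m}-1|=\log p\neq 0$ forces $\hat a(\chi_{0})=\sum_i a_i=0$, matching (i). Note that (ii) is genuinely needed: since $u$ is even, $\hat u(\chi)=0$ for every odd $\chi$, so the factored relation is vacuous there and cannot by itself annihilate the odd part of $\hat a$.

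It therefore remains only to force $\hat a(\chi)=0$ for the \emph{nontrivial even} characters, and for these the relation $\hat u(\chi)\,\overline{\hat a(\chi)}=0$ suffices \emph{provided} $\hat u(\chi)\neq 0$. This non-vanishing is the crux of the proof and the step I expect to be the main obstacle, because it is the genuinely arithmetic input. Expanding $\log|1-\zeta^{m}|=-\mathrm{Re}\sum_{n\ge 1}\zeta^{mn}/n$ and using the Gauss-sum identity $\sum_m\chi(m)\zeta^{mn}=\overline\chi(n)\,\tau(\chi)$ for primitive $\chi$ shows that $\hat u(\chi)$ equals, up to an explicit nonzero Gauss-sum factor, the value $L(1,\overline\chi)$ of a Dirichlet $L$-function away from its trivial zero. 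Hence $\hat u(\chi)\neq 0$ for nontrivial even $\chi$ is exactly Dirichlet's theorem $L(1,\chi)\neq 0$ (equivalently, the non-degeneracy of the cyclotomic regulator, i.e.\ the multiplicative independence of cyclotomic units modulo torsion). Granting this, every character component of $\hat a$ vanishes, and Fourier inversion on $G$ gives $a_i=0$ for all $i\in G$.

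Finally I would dispose of the composite case. When $p$ is not prime, (iii) also ranges over roots of unity $\xi$ of order $d\mid p$ with $d>1$, for which $\xi^{i}-1$ depends only on $i\bmod d$; I would organize these relations by the divisors $d$ of $p$ and peel off their contributions by M\"obius inversion, reducing each layer to the primitive case on $(\mathbb{Z}/d\mathbb{Z})^{*}$ and re-running the same character argument with the $L$-function non-vanishing for characters modulo $d$. This bookkeeping is routine once the prime case is understood; alternatively, since Lemma~\ref{franz} is classical, one may simply invoke Franz's original argument or the treatments in the cited references.
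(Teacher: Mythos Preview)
The paper does not prove Lemma~\ref{franz}; it simply quotes it as a classical result of Franz and cites the original source, so there is no in-paper argument to compare against. Your approach---taking absolute values in (iii), passing to logarithms, reading the resulting system as a convolution on $G=(\mathbb{Z}/p\mathbb{Z})^{*}$, diagonalizing by characters, and reducing the non-degeneracy to $L(1,\chi)\neq 0$ for nontrivial even $\chi$---is precisely the classical proof going back to Franz (and reproduced in standard references on Reidemeister torsion). For prime $p$ your write-up is correct and complete.

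For composite $p$ your sketch is in the right direction but would not stand alone without more work. Two points to be aware of: first, $\hat u(\chi_{0})=\sum_{m\in G}\log|1-\zeta^{m}|=\log|\Phi_{p}(1)|$, which vanishes whenever $p$ has at least two distinct prime factors, so there hypothesis~(i) is genuinely needed rather than recovered from (iii); second, for an \emph{imprimitive} even character $\chi$ the Gauss-sum identity you invoke fails as written, and in fact $\hat u(\chi)$ can vanish, so the convolution relation coming from primitive $p$-th roots alone does not kill $\hat a(\chi)$. The usual remedy is exactly what you outline: bring in the relations (iii) at roots of every order $d\mid p$ and peel off the layers by induction on the divisor lattice (equivalently, work with the primitive character $\chi^{*}$ modulo the conductor and the corresponding $\log|1-\zeta_{d}^{m}|$). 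Since you explicitly defer this bookkeeping to Franz's original paper, and the paper under review does the same, this is an acknowledged omission rather than a genuine gap.
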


Then we can just follow the proofs in \cite{MR3069647, MR3539369} to show the following proposition.

\begin{prop}
The invariants $\Delta (M, \omega)$ distinguish the lens spaces $L(p, q)$ when $p$ is odd.
\end{prop}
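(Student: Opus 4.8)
The plan is to follow the strategy of Reidemeister and of \cite{MR3539369}: encode the assumption that two lens spaces have the same invariant as a single multiplicative relation among the quantities $\xi^i-1$ for $p$-th roots of unity $\xi$, and then invoke Franz's Lemma (Lemma~\ref{franz}) to show that no nontrivial such relation can hold.

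First I would record the explicit value furnished by Proposition~\ref{lensvalue}: for every nontrivial $\omega$,
\[
\Delta(L(p,q),\omega)=-\frac{1}{(t^2-t^{-2})(t^{2q}-t^{-2q})},\qquad t=\omega([m_n]),
\]
where $t$ is a nontrivial $p$-th root of unity. Here the hypothesis that $p$ is odd is essential: it makes $x\mapsto x^2$ and $x\mapsto x^{2q}$ bijections of the group of $p$-th roots of unity, so that both factors in the denominator are nonzero for \emph{every} nontrivial $\omega$ (the argument given after Proposition~\ref{lensvalue} shows $t^{4}=1$ or $t^{4q}=1$ forces $t=1$), and the formula is valid for $t$ of every order $d\mid p$ with $d>1$. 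As $\omega$ ranges over all nontrivial cohomology classes, $t$ ranges over all nontrivial $p$-th roots of unity.

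Next I would suppose that $\Delta(L(p,q),-)$ and $\Delta(L(p,q'),-)$ agree after identifying the two copies of $H_1\cong\mathbb{Z}/p\mathbb{Z}$; since any two such identifications differ by an automorphism, this yields a unit $s\in(\mathbb{Z}/p\mathbb{Z})^{\times}$ with
\[
(t^2-t^{-2})(t^{2q}-t^{-2q})=(t^{2s}-t^{-2s})(t^{2sq'}-t^{-2sq'})
\]
for all nontrivial $p$-th roots $t$. Setting $u=t^2$ and then $\xi=u^2=t^4$ (both bijective because $p$ is odd) and writing $u^a-u^{-a}=u^{-a}(\xi^a-1)$, one collects a monomial prefactor and reaches a relation of the shape $(\xi-1)(\xi^q-1)=\xi^{N}(\xi^{s}-1)(\xi^{sq'}-1)$. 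Multiplying this by its image under $\xi\mapsto\xi^{-1}$ cancels the monomial $\xi^{N}$ and produces the symmetric, monomial-free identity
\[
(\xi-1)(\xi^{-1}-1)(\xi^{q}-1)(\xi^{-q}-1)=(\xi^{s}-1)(\xi^{-s}-1)(\xi^{sq'}-1)(\xi^{-sq'}-1),
\]
valid for every nontrivial $p$-th root of unity $\xi$. I would then define integers $(a_i)_{i\in\mathbb{Z}_p}$, letting $a_i$ be the multiplicity of $i$ in $\{1,-1,q,-q\}$ minus its multiplicity in $\{s,-s,sq',-sq'\}$ (all these indices lie in $\mathbb{Z}_p$ since $\gcd(q,p)=\gcd(q',p)=1$ and $s$ is a unit). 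The displayed identity is exactly $\prod_{i\in\mathbb{Z}_p}(\xi^i-1)^{a_i}=1$, while $\sum_i a_i=0$ and $a_i=a_{-i}$ because both index multisets are symmetric under negation. Franz's Lemma then forces $a_i=0$ for all $i$, i.e. $\{1,-1,q,-q\}=\{s,-s,sq',-sq'\}$ in $(\mathbb{Z}/p\mathbb{Z})^{\times}$, and a short case analysis on which element of the left-hand set equals $s$ gives $q'\equiv\pm q\pmod p$ or $qq'\equiv\pm 1\pmod p$. This is precisely the condition for $L(p,q)$ and $L(p,q')$ to be homeomorphic, so $\Delta(M,\omega)$ separates non-homeomorphic lens spaces; the reverse implication is the topological invariance of $\Delta$ together with its manifest symmetry under $t\mapsto t^{-1}$.

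I expect the main obstacle to be the third step: passing to a symmetric, monomial-free relation and, above all, ensuring it holds for $p$-th roots of unity of \emph{every} order dividing $p$ rather than only the primitive ones, since $p$ is merely odd and not assumed prime. This is exactly the input Franz's Lemma requires, and it is where the oddness of $p$ does the real work, both in guaranteeing nonvanishing denominators for all nontrivial $\omega$ and in making the squaring substitutions $t\mapsto t^2\mapsto t^4$ bijective and hence reversible. By contrast, the final case analysis translating $a_i\equiv 0$ into $q'\equiv\pm q^{\pm1}\pmod p$ is routine.
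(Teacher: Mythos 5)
Your proposal follows the paper's own route step for step: Proposition~\ref{lensvalue}, a unit $s$ (the paper's $c$) encoding the identification of the two copies of $\mathbb{Z}/p\mathbb{Z}$, symmetrization by multiplying with the conjugate relation, Franz's Lemma, and a case analysis on the resulting equality of index sets. Up to that point the argument is sound, and your attention to the fact that $\xi=t^4$ must sweep out \emph{all} nontrivial $p$-th roots of unity (not just primitive ones) is exactly the point where oddness of $p$ enters.

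However, there is a genuine gap at the very end. The symmetrization step deliberately discards a sign, so Franz's Lemma can only return $\{1,-1,q,-q\}=\{s,-s,sq',-sq'\}$, hence $q'\equiv\pm q$ or $qq'\equiv\pm1\pmod p$ --- the \emph{unoriented} homeomorphism classification. You then assert that this is ``precisely the condition'' for the invariants to agree, invoking a ``manifest symmetry under $t\mapsto t^{-1}$.'' That symmetry is the wrong one: it corresponds to $\omega\mapsto\omega^{-1}$ (i.e.\ $s=-1$), not to $q\mapsto -q$. In fact $t^{-2q}-t^{2q}=-(t^{2q}-t^{-2q})$, so $\Delta(L(p,q),\omega)$ and $\Delta(L(p,-q),\omega)$ differ by a sign and are generically \emph{not} equal (e.g.\ $L(7,1)$ versus $L(7,6)$: one invariant is positive real, the other negative). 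Since $\Delta(M,\omega)$ is an invariant of oriented manifolds and the classification theorem concerns orientation-preserving diffeomorphism, you must resolve the sign ambiguity. The paper does this by returning to the un-symmetrized relation
\[
(\eta^{2}-\eta^{-2})(\eta^{2q}-\eta^{-2q})=(\eta^{2c}-\eta^{-2c})(\eta^{2cq'}-\eta^{-2cq'})
\]
in the case $c=\pm q$, which forces $\eta^{2}-\eta^{-2}=\eta^{2qq'}-\eta^{-2qq'}$ and hence $qq'\equiv 1\pmod p$ (using again that $p$ is odd). Your proof needs this final step, or an equivalent sign bookkeeping, to conclude.
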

\begin{proof}
Let $L(p, q)$ and $L(p, q')$ be two lens spaces with $p>0$. When $q=q' \mod p$ or $qq'\equiv 1 \mod p$, a precise orientation preserving diffeomorphism exists between $L(p, q)$ and $L(p, q')$. Now we can assume that $p>q>0$ and $p>q'>0$, and show that if $L(p, q)\cong L(p, q')$ we have either $q=q'$ or $qq'\equiv 1 \mod p$. 

Let $B=\mathbb{C}$, and let $G$ be the direct sum of $\mathbb{Z}$ and the group generated by $\eta=\exp (\frac{2\pi i}{p})$. There are totally $p-1$ non-trivial homomorphisms from $\mathbb{Z}/ p\mathbb{Z}$ to $G$. For the lens space $L(p, q)$, consider the surgery presentation in Example \ref{lens} and let $[m]$ be the meridian of the right-most component. Let $\omega_k$ be the map sending $[m]$ to $\eta^k$ for $1\leq k \leq p-1$. We consider the corresponding invariants $\Delta (M, \omega_k)$ and by Proposition \ref{lensvalue} we have
$$\Delta (L(p, q), \omega_k)=-\frac{1}{\eta^{2k}-\eta^{-2k}} \frac{1}{\eta^{2kq}-\eta^{-2kq}}.$$
Similarly, for the lens space $L(p, q')$, we define $w_l': \mathbb{Z}/ p\mathbb{Z} \to G$ to be the homomorphism sending the right-most meridian $[m']$ to  
$\eta^l$ for $1\leq l \leq p-1$, and we have
$$\Delta (L(p, q'), \omega'_l)=-\frac{1}{\eta^{2l}-\eta^{-2l}} \frac{1}{\eta^{2lq'}-\eta^{-2lq'}}.$$

When $L(p, q)\cong L(p, q')$, there is an invertible element $c\in \mathbb{Z}/p\mathbb{Z}$ such that
$$\Delta (L(p, q), \omega_k)=\Delta (L(p, q'), \omega'_{kc}).$$
Namely
$$-\frac{1}{\eta^{2k}-\eta^{-2k}} \frac{1}{\eta^{2kq}-\eta^{-2kq}}=-\frac{1}{\eta^{2kc}-\eta^{-2kc}} \frac{1}{\eta^{2kcq'}-\eta^{-2kcq'}},$$
for $1\leq k \leq p-1$. Then we have 
$$(\eta^{2k}-\eta^{-2k})(\eta^{2kq}-\eta^{-2kq})=(\eta^{2kc}-\eta^{-2kc})(\eta^{2kcq'}-\eta^{-2kcq'}).$$
Taking the product with its conjugacy on both sides we have
$$(1-\eta^{-4k})(1-\eta^{4k})(1-\eta^{-4kq})(1-\eta^{4kq})=(1-\eta^{-4kc})(1-\eta^{4kc})(1-\eta^{-4kcq'})(1-\eta^{4kcq'}).$$
Note that since $p$ is an odd number, $\{\eta^{4k}\}_{1 \leq k \leq p-1}$ contains all the $p$-th root of unity. Namely we have 
$$(1-\xi^{-1})(1-\xi)(1-\xi^{-q})(1-\xi^{q})=(1-\xi^{-c})(1-\xi^{c})(1-\xi^{-cq'})(1-\xi^{cq'}),$$
for all $p$-th root of unity $\xi$. 

By Lemma~\ref{franz} we see that $\{1, -1, q, -q\}$ and $\{c, -c, cq', -cq\}$ must coincide as sets. If $c=\pm 1$ we have $q=q'$. If $c=\pm q$ we have $qq'\equiv \pm 1 \mod p$. But from
$(\eta^2-\eta^{-2})(\eta^{2q}-\eta^{-2q})=(\eta^{2c}-\eta^{-2c})(\eta^{2cq'}-\eta^{-2cq'})$
we see that $\eta^2-\eta^{-2}=\eta^{2qq'}-\eta^{-2qq'}$. Since $\eta=\exp (\frac{2\pi i}{p}) $ and $p$ is odd, we have $qq'\equiv 1 \mod p$.
\end{proof}

\begin{rem}
When $p$ is even, the invariants $\Delta (M, \omega)$ in general can not distinguish $L(p, q)$. For example, $L(12, 5)$ and $L(12, 11)$ are not orientation preservingly homeomorphic, but there are no suitable $\Delta (M, \omega)$ to distinguish them. The reason is that the only candidate of $(B, G)$ must have the group $G$ isomorphic to $\mathbb{Z}/3\mathbb{Z}$, and for any $t\in \mathbb{Z}/3\mathbb{Z}$ we have $t^{10}-t^{-10}=t^{22}-t^{-22}$. Furthermore, when $p$ is a power of $2$, the invariant $\Delta (M, \omega)$ can not be defined. In this case, it is impossible to find a group $G$ so that there is a non-trivial cohomology class $\omega$.
\end{rem}

\bibliographystyle{siam}
\bibliography{bao}

\end{document}